\newtheorem{theorem}{Theorem}[section]
\newtheorem{proposition}[theorem]{Proposition}
\newtheorem{lemma}[theorem]{Lemma}
\numberwithin{equation}{section} 
\numberwithin{figure}{section}  
\newcommand \la \langle
\newcommand \ra \rangle
\newcommand \Kcal {\mathcal{K}}
\newcommand \Hcal {\mathcal{H}}
\newcommand \underdel {\underline \partial}
\newcommand \trianglerightNEW \triangleright
\newcommand \auth {\textsc} 
\newcommand \CC {\mathbb C}
\newcommand \bei {\begin{itemize}}
\newcommand \eei {\end{itemize}}
\newcommand \be {\begin{equation}}
\newcommand \bel {\be\label}
\newcommand \ee {\end{equation}}
\newcommand \del \partial
\newcommand \RR {\mathbb R}
\newcommand \eps \epsilon
\let\oldmarginpar\marginpar
\renewcommand\marginpar[1]{\-\oldmarginpar[\raggedleft\footnotesize #1]%
{\raggedright\footnotesize #1}}
\begin{document}

\title{\bf \Large 
Asymptotic Behavior of the Solution to the Klein-Gordon-Zakharov Model in Dimension Two}

\author{Shijie Dong\footnote{Fudan University, School of Mathematical Sciences, Shanghai, China.
Email: dongs@ljll.math.upmc.fr.
\newline MSC code: 35L05, 35L52, 35L70.}
}


\date{\today}
\maketitle

\begin{abstract} 
{
Consider the Klein-Gordon-Zakharov equations in $\RR^{1+2}$, and we are interested in establishing the small global solution to the equations and in investigating the pointwise asymptotic behavior of the solution. The Klein-Gordon-Zakharov equations can be regarded as a coupled semilinear wave and Klein-Gordon system with quadratic nonlinearities which do not satisfy the null conditions, and the fact that wave components and Klein-Gordon components do not decay sufficiently fast makes it harder to conduct the analysis. In order to conquer the difficulties, we will rely on the hyperboloidal foliation method and a minor variance of the ghost weight method. As a side result of the analysis, we are also able to show the small data global existence result for a class of quasilinear wave and Klein-Gordon system violating the null conditions.
}
\end{abstract}
{\sl Keywords.} Klein-Gordon-Zakharov model; global solution; pointwise decay; hyperboloidal foliation method.

\tableofcontents

\section{Introduction} 

\paragraph{Background and motivations}

The study of nonlinear wave equations, nonlinear Klein-Gordon equations, and their coupled systems has attracted the attention from the researchers since decades ago. Among the most concerned questions are that what kinds of quadratic nonlinearities can guarantee the small data global existence results for the equations, and that what is the asymptotic behavior of the global solution. The milestone work on this subject is due to Christodoulou \cite{Christodoulou} and Klainerman \cite{Klainerman86} for nonlinear wave equations in $\RR^{1+3}$, and is due to Klainerman \cite{Klainerman85} and Shatah \cite{Shatah} for nonlinear Klein-Gordon equations in $\RR^{1+3}$. After that, consecutive progress is made in this field, from different types of nonlinearities to various dimensions. 


We now briefly revisit some existing results on the coupled wave and Klein-Gordon equations, with or without physical models behind, which are the main motivation of this paper. The study of the coupled wave and Klein-Gordon equations has its own difficulties, which is due to the well-known fact that the scaling vector field $L_0 = t\del_t + x^a \del_a$ does not commute with the Klein-Gordon operator as well as many others. 
On one hand, in $\RR^{1+3}$, the pioneering work is due to Bachelot \cite{Bachelot} on the Dirac-Klein-Gordon system and Georgiev \cite{Georgiev} on the general wave and Klein-Gordon system with null nonlinearities. Later on, lots of other kinds of coupled wave and Klein-Gordon equations were studied, see for instance \cite{OTT, PsarelliA, PsarelliT, Tsutsumi, Katayama12a, Katayama18, PLF-YM-book, PLF-YM-cmp, Ionescu-P, Ionescu-P2, Wang, Klainerman-QW-SY, Fang, Dong1811, DLW, Dong1912}.
On the other hand, in $\RR^{1+2}$, the slow decay nature of the wave components and the Klein-Gordon components makes the study of the coupled wave and Klein-Gordon equations more formidable. The first such result is due to Ma \cite{MaH1, MaH2}, where a class of quasilinear wave and Klein-Gordon system was shown to admit the global solution. Shortly after that, more types quadratic nonlinearities were treated, see \cite{Ma17, Ma19,Stingo18, Stingo19, Dong2005}.

Next, we return to the Klein-Gordon-Zakharov equations, which appear in plasma physics, see for instance \cite{Zakharov, Dendy, Masmoudi}. The first small data global existence result was established by Ozawa, Tsutaya, and Tsutsumi \cite{OTT} in $\RR^{1+3}$, and then some other alternative proofs were provided in \cite{Tsutaya, Katayama12a, Dong1811, Dong1905}. Later on, Ozawa, Tsutaya, and Tsutsumi \cite{OTT2} showed that the Klein-Gordon-Zakharov equations are well-posed with different propagation speeds in $\RR^{1+3}$. Besides, there exists other interesting work on different aspects of the studies on Klein-Gordon-Zakharov equations: Guo and Yuan \cite{Guo} proved that the Klein-Gordon-Zakharov equations with first order expressions admit the global smooth solution in $\RR^{1+2}$ when the initial data are bounded;
it is also worth to mention the interesting work of Masmoudi and Nakanishi \cite{Masmoudi} in which the authors studied the convergence of the Klein–Gordon–Zakharov equations to the Schrodinger equation as some parameters go to $+\infty$.

We recall, on one hand, that one of the successful tools in dealing with coupled wave and Klein-Gordon equations is the hyperboloidal foliation method, dating back to Klainerman \cite{Klainerman86} and Hormander \cite{Hormander}, which can be regarded as the Klainerman's vector field method on hyperboloids, and which was developed by LeFloch-Ma \cite{PLF-YM-book, PLF-YM-cmp, PLF-YM-arXiv1, PLF-YM-arXiv2}, and by Klainerman-Wang-Yang \cite{Wang, Klainerman-QW-SY}. In this method, the use of the scaling vector field $L_0$ is avoided, which makes it consistent with both the wave equations and the Klein-Gordon equations. On the other hand, the ghost weight method, introduced by Alinhac \cite{Alinhac1, Alinhac2}, is very powerful in studying nonlinear partial differential equations in various dimensions. Originally the method was used on the wave equations in $\RR^{1+2}$, but it can also be use to study the coupled wave and Klein-Gordon systems, see for instance \cite{Dong2005}.

Motivated by the existing work on the coupled wave and Klein-Gordon equations, we are interested in establishing the small data global existence result for the Klein-Gordon-Zakharov equations in $\RR^{1+2}$, and in addition, we will also demonstrate the asymptotic behavior of the solution. In order to overcome the slow decay property of the wave and the Klein-Gordon components in dimension two, we will combine the ghost weight method and the hyperboloidal foliation method. As a consequence, we can generalise, to some extent, the result on the quasilinear wave and Klein-Gordon equations in $\RR^{1+2}$ of \cite{Stingo18}.

\paragraph{Model problem and main difficulties}

We will consider the Klein-Gordon-Zakharov model in $\RR^{1+2}$
\bel{eq:model-KGZ}
\aligned
-\Box E + E = - n E,
\\
-\Box n = \Delta |E|^2.
\endaligned
\ee
The unknowns are $E = (E^1, E^2)$ taking values in\footnote{Originally $E$ takes values in $\RR^2$, but more general cases of taking values in $\CC^{N_0}$ with $N_0 = 1, 2, \cdots$ can also be treated.} $\RR^2$, $n$ taking values in $\RR$, which are the Klein-Gordon component and the wave component respectively.
We take the signature $(-, +, +)$ in the spacetime $\RR^{1+2}$. As usual, $\Box = \del_\alpha \del^\alpha$ represents the wave operator, $\Delta = \del_a \del^a$ represents the Laplace operator, with the Greek letters $\alpha, \beta, \cdots \in \{0, 1, 2\}$ denoting the spacetime indices, and Latin letters $a, b, \cdots \in \{1, 2\}$ representing the space indices, and the Einstein summation convention is adopted unless otherwise specified.

The initial data are prescribed at the slice $t=t_0 =2$
\bel{eq:model-ID}
\aligned
\big( E, \del_t E \big) (t_0, \cdot)
=
\big( E_0, E_1 \big),
\qquad
\big( n, \del_t n \big) (t_0, \cdot)
=
\big( n_0, n_1 \big) := \big( \Delta n^{\Delta}_0, \Delta n^{\Delta}_1 \big),
\endaligned
\ee
and the functions $(E_0, E_1, n^\Delta_0, n^\Delta_1)$ are assumed to be supported in the unit ball $\{ x : |x| \leq 1 \}$.

Due to the serious problem that wave components and Klein-Gordon components decay insufficiently fast in $\RR^{1+2}$, the quadratic nonlinearities appearing in \eqref{eq:model-KGZ}, which violate the null conditions, are at the border line of integrability. To be more precise, in $\RR^{1+2}$ linear waves decay at the speed of $t^{-1/2}$, while linear Klein-Gordon components decay at the speed of $t^{-1}$. This means that the best we can expect for the nonlinearities is
$$
\big\| n E \big\|_{L^2(\RR^2)} 
\lesssim t^{-1},
\qquad
\big\| \Delta |E|^2 \big\|_{L^2(\RR^2)} 
\lesssim t^{-1},
$$
which are non-integrable quantities. Thus under this situation, it is very hard to prove the sharp pointwise decay results,  as well as closing the bootstrap, of $E$ and $n$.

Recall that in the framework of the hyperboloidal foliation method, we will integrate over the hyperbolic time $s = \sqrt{t^2-|x|^2}$, which means we need to show $|n| \lesssim s^{-1}$ to close the bootstrap argument. But this does not seem to be easy, because when we go to the $n$ equation, the fact that the Klein-Gordon component $E$ decays only like $|E| \lesssim t^{-1}$, as already remarked, leads to a polynomial growth even in the natural energy of the $n$ component. 

Besides, the lack of partial derivatives on the $n$ component in the $E$ equation seems also to be a problem (which is difficult to handle even in $\RR^{1+3}$, see for instance \cite{PLF-YM-cmp, Wang, Ionescu-P, Ionescu-P2, Klainerman-QW-SY, Fang, Dong1811, Dong1912}), which is meanly due to the fact that the $L^2$--type norms of $n$ cannot be bounded by its natural wave energy.

\paragraph{Main theorems}

We first introduce the natural energy for the wave components and the Klein-Gordon components in the hyperboloidal foliation setting in $\RR^{1+2}$. 
Let $\phi = \phi(t, x)$ be a sufficiently nice function supported in the spacetime region $\{ (t, x) : t \geq |x| + 1 \}$, then its natural energy on the hyperboloid $\Hcal_s = \{ (t, x) : t^2 = |x|^2 + s^2 \}$ is defined by
\bel{eq:E1}
E_m (s, \phi)
:=
\int_{\Hcal_s} (\del_t \phi)^2 + \sum_a (\del_a \phi)^2 + 2 (x^a/t) \del_a \phi \del_t \phi + m^2 \phi^2 \, dx.
\ee
The abbreviation $E (s, \phi) = E_0 (s, \phi)$ is used.

Recall that our goal is to prove the small data global existence result to the model problem \eqref{eq:model-KGZ}, and to derive the pointwise decay result of the solution. Now the first main result is illustrated.

\begin{theorem}\label{thm:main1}
Consider the Klein-Gordon-Zakharov equations in \eqref{eq:model-KGZ}, and let $N \geq 11$ be an integer. There exits $\eps_0 >0$, such that for all $\eps < \eps_0$, and all compactly supported initial data satisfying the smallness condition
\be 
\| E_0 \|_{H^{N+2}} + \| E_1 \|_{H^{N+1}}
+
\| n^\Delta \|_{H^{N+3}} + \| n^\Delta \|_{H^{N+2}}
\leq \eps,
\ee
the Cauchy problem \eqref{eq:model-KGZ}--\eqref{eq:model-ID} admits a global-in-time solution $(E, n)$, 
which satisfies the following sharp pointwise decay results
\bel{eq:thm-decay}
|E(t, x)| \lesssim t^{-1},
\quad
|n(t, x) | \lesssim t^{-1/2} (t-r)^{-1/2}.
\ee
Furthermore, with the notation for the Lorentz boosts $L_a = x_a \del_t + t \del_a$ and $0<\delta<1/24$, the following energy estimates are also valid
\bel{eq:thm-E}
\aligned
E_1 (s, \del^I L^J E)^{1/2}
+
E_1 (s, \del \del^I L^J E)^{1/2}
&\lesssim s^\delta,
\qquad
&|I| + |J| \leq N,
\\
E_1 (s, \del^I L^J E)^{1/2}
&\lesssim 1,
\qquad
&|I| + |J| \leq N-3,
\\
E (s, \del^I L^J n)^{1/2}
&\lesssim s^\delta,
\qquad
&|I| + |J| \leq N.
\endaligned
\ee

\end{theorem}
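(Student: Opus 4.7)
The plan is to run a bootstrap argument on the hyperboloidal foliation $\mathcal H_s$, combined with a ghost-weight energy identity tuned to the two-dimensional setting, after first reformulating the wave unknown in order to expose the divergence structure of the source $\Delta|E|^2$. Concretely, I would introduce an auxiliary potential $N$ solving $-\Box N=|E|^2$ with initial data $(n_0^\Delta, n_1^\Delta)$, so that $n=\Delta N$ throughout the evolution. This moves two spatial derivatives off the nonlinearity and transfers them onto $N$, which is crucial because the naive wave energy of $n$ loses half a power in $t$ from the Klein--Gordon bound $|E|\lesssim t^{-1}$, whereas for $N$ the source $|E|^2$ is of size $t^{-2}$ and, once combined with the weighted foliation norms, becomes almost integrable in $s$.

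The second step is to set up the bootstrap. I would assume, for $|I|+|J|\le N$, bounds of the form
\begin{equation*}
E_1(s,\partial^I L^J E)^{1/2}+E_1(s,\partial\partial^I L^J E)^{1/2}\le C_1\eps s^{\delta},\qquad E(s,\partial^I L^J N)^{1/2}+E(s,\partial\partial^I L^J N)^{1/2}\le C_1\eps s^{\delta},
\end{equation*}
together with corresponding non-growing bounds for $|I|+|J|\le N-3$, from which the standard Klainerman--Sobolev inequality on $\mathcal H_s$ yields interior pointwise decay $|E|\lesssim\eps\, s^{-1+\delta}/t^{1/2}$ for the Klein--Gordon component and $|\partial N|\lesssim \eps\, s^{-1+\delta}$ for the potential, hence, after two more commuted derivatives, the desired $|n|\lesssim t^{-1/2}(t-r)^{-1/2}$ bound modulo the bootstrap loss.

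The core of the argument is the energy step, where I would use a variant of Alinhac's ghost weight $e^{q(\sigmab)}$ with $\sigmab$ adapted to hyperboloids (essentially a function of $(t-r)/s$), so that the standard energy identity on $\mathcal H_s$ acquires an additional positive tangential term $\int_{s_0}^s\!\!\int_{\mathcal H_\tau}e^{-q}(\del_t+\tfrac{x^a}{t}\del_a)\phi)^2$; this supplies the missing half-power of integrability needed to absorb quantities of size $t^{-1}$ in $L^2(\mathcal H_s)$. With this tool the nonlinearity $nE$ can be bounded by splitting $n=\Delta N$ and using the improved derivative control on $N$, while $|E|^2$ is estimated with one factor in $L^\infty$ (using the sharp $|E|\lesssim t^{-1}$) and the other in the ghost-weighted $L^2$; the residual logarithmic losses contribute only an $s^\delta$ factor, compatible with the bootstrap. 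Finally, after closing the bootstrap with $C_1$ doubled, I would upgrade to the sharp pointwise estimates \eqref{eq:thm-decay} by inserting the improved energy bounds into a Klein--Gordon pointwise representation formula for $E$ (dispensing with the $s^\delta$ loss at low order) and using the explicit wave fundamental solution in $\RR^{1+2}$ applied to $|E|^2$ for the sharp bound on $n$.

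The main obstacle, in my view, is the step of controlling $\|nE\|_{L^2(\mathcal H_s)}$ without any derivative falling on $n$: even with the $n=\Delta N$ reduction, one still has to commute $\partial^I L^J$ through $\Delta$ and pair the resulting high-order quantities with the ghost-weighted tangential norm of $E$. Balancing how many derivatives land on $E$ versus $N$ at top order, while avoiding the forbidden scaling vector field $L_0$, is what forces the $s^\delta$ loss at order $N$ and the restriction $\delta<1/24$; getting this balance right, and ensuring that the improved decay of $N$ (compared to $n$) is enough to close all combinations, is the delicate technical point of the proof.
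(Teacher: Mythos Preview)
Your skeleton (introduce $N$ with $n=\Delta N$, bootstrap on hyperboloids, invoke a ghost-weight device) matches the paper, but the ghost-weight mechanism you describe is the wrong one and would not close the low-order uniform bounds. The classical Alinhac multiplier $e^{q}\partial_t u$ with \emph{bounded} weight contributes an extra positive spacetime integral of good derivatives on the \emph{left-hand side}; that term is useful only when the nonlinearity has null structure, so that one factor can be exchanged for a good derivative. Here $nE$ and $|E|^2$ have no null structure, so the tangential spacetime term buys nothing --- and the quantity $(\partial_t+\tfrac{x^a}{t}\partial_a)\phi=\underline{\partial}_\perp\phi$ you single out is in any case already controlled by the hyperboloidal energy. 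What the paper actually uses is the \emph{unbounded} multiplier $(t-r)^{-\gamma}\partial_t u$: the whole point is that $(t-r)^{-\gamma}$ now sits on the \emph{right-hand side}, and since $t-r=s^2/(t+r)$ this converts the borderline source $s^{-1+\delta}$ into an integrable $s^{-1-\delta'}$, giving a uniformly bounded $(t-r)$-weighted energy $\int_{\mathcal H_s}(t-r)^{-4\delta}(s/t)^2|\partial\partial^I L^J N|^2\,dx\lesssim\eps^2$.

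That uniform weighted bound is then cashed in via the second ingredient you are missing, the elementary second-derivative inequality
\[
|\partial\partial N|\ \lesssim\ (t-r)^{-1}\bigl(|\partial LN|+|\partial N|\bigr)+\tfrac{t}{t-r}\,|\Box N|,
\]
which yields both the sharp pointwise decay $|n|=|\Delta N|\lesssim s^{-1}(t-r)^{-1/2}$ and the uniform $L^2$ bound $\|(s/t)(t-r)^{1/2}\partial\partial\partial^I L^J N\|_{L^2}\lesssim\eps$; no $2$D fundamental solution is invoked. Likewise the sharp $|E|\lesssim t^{-1}$ is not obtained from a Klein--Gordon representation formula but from a \emph{uniform} low-order energy: one uses the undivided inequality $E_1(s)\le E_1(s_0)+\int\!\int (s'/t)\,|\partial_t E|\,|\Delta N\cdot E|\,dxds'$, places the factor $s'/t$ on $\Delta N$, and pairs the uniform $\|(s/t)(t-r)^{1/2}\partial\partial N\|_{L^2}$ against $\|(t-r)^{-1/2}E\|_{L^\infty}$ to get an integrable $s^{-3/2+O(\delta)}$. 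Without the $(t-r)^{-\gamma}$ weight and the $\partial\partial$ inequality you only obtain $\|(s/t)\partial\partial N\|_{L^2}\lesssim s^\delta$ and $|\Delta N|\lesssim s^{-1+\delta}$, and the low-order energy of $E$ then grows like $s^\delta$; your bootstrap for the non-growing bounds does not close, and the representation-formula step you propose afterwards would still need the sharp $|\Delta N|\lesssim s^{-1}$ as input.
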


Although the system \eqref{eq:model-KGZ} is with critical nonlinearities, the sharp pointwise decay results of the solution $(E, n)$ can still be obtained.
And this is the first global existence and asymptotic results, as far as we know, on the wave and Klein-Gordon systems in $\RR^{1+2}$ violating the null conditions.

We note that the divergence form nonlinearities, see for instance \cite{Katayama12a, Dong1811, Dong2005}, in the $n$ equation is a gain in some sense, and we now take advantage of this structure.
Expressing the original equations in \eqref{eq:model-KGZ} in terms of the scalar valued variables $(E^a, n)$ with $a = 1, 2$ gives us
\bel{eq:model-KGZ2}
\aligned
-\Box E^a + E^a = - n E^a,
\\
-\Box n = \Delta \big( (E^1)^2 + (E^2)^2 \big).
\endaligned
\ee
Then we proceed to introduce the new variable $(n^\Delta)$ with the relation
\bel{eq:decomp}
n = \Delta n^\Delta,
\ee
and this new variable satisfies the following wave equation:
\bel{eq:n-12}
-\Box n^\Delta = (E^1)^2 + (E^2)^2,
\qquad
\big( n^\Delta, \del_t n^\Delta  \big) (t_0)
= (n^\Delta_0, n^\Delta_1).
\ee
To sum up, we will consider, in the analysis, the scalar valued variables
\be 
E^a, \qquad n^\Delta,
\ee
which are related to the original unknowns $(E, n)$ by the relations
\be 
E = (E^1, E^2),
\qquad
n = \Delta n^\Delta.
\ee
And the variables $(E^a, n^\Delta)$ are solutions to the following equations
\bel{eq:model-KGZ2}
\aligned
&-\Box E^a + E^a = \Delta n^\Delta E^a,
\qquad
&\big( E^a, \del_t E^a \big)(t_0)
= (E^a_0, E^a_1),
\\
&-\Box n^\Delta = (E^1)^2 + (E^2)^2,
\qquad
&\big( n^\Delta, \del_t n^\Delta  \big) (t_0)
= (n^\Delta_0, n^\Delta_1).
\endaligned
\ee

To achieve the sharp pointwise decay of $n = \Delta n^\Delta$, we prove a new type of energy estimates as well as a new type of Sobolev--type inequality accordingly, which is inspired by the ghost weight method. This energy estimates allow us to gain $t$ decay at the expense of losing $t-|x|$ decay, and fortunately the loss of $t-r$ decay is not a problem as $\del \del n^\Delta$, roughly speaking, has an extra $(t-|x|)^{-1}$ decay compared to $\del L n^\Delta, \del n^\Delta$ (see \eqref{eq:ddu}), for instance.
On the other hand, in order to obtain the sharp pointwise decay estimates of $E$, we prove the uniform energy estimates, which is thanks to the trick that we move the good factor $s'/t$ in the energy estimates \eqref{eq:w-EE2} to the source term, and the details are demonstrated in the proof of Proposition \ref{prop:refine}.

Inspired by the treatment on the Klein-Gordon-Zakharov equations in the form \eqref{eq:model-KGZ2}, we find that our method can also be applied to the following quasilinear wave and Klein-Gordon system:
\bel{eq:model-q} 
\aligned
&-\Box v + v + P_1^{\alpha \beta} v \del_\alpha \del_\beta u + P_2^{\alpha \beta \gamma} \del_\gamma v \del_\alpha \del_\beta w = 0,
\\
&-\Box w + P_1^{\alpha \beta} v \del_\alpha \del_\beta v + P_2^{\alpha \beta \gamma} \del_\gamma v \del_\alpha \del_\beta v = 0,
\\
&\big( v, \del_t v, w, \del_t w \big)(t_0) = (v_0, v_1, w_0, w_1).
\endaligned
\ee 
In the coupled system \eqref{eq:model-q}, $P_1^{\alpha \beta}, P_2^{\alpha \beta \gamma}$ are constants, which do not need to satisfy the null conditions. As a comparison, in \cite{Stingo18} the nonlinearities $P_2^{\alpha \beta \gamma} \del_\gamma v \del_\alpha \del_\beta u, P_2^{\alpha \beta \gamma} \del_\gamma v \del_\alpha \del_\beta v$ are considered and certain null conditions are assumed to the nonlinearities, while we can also treat here the nonlinearities $P_1^{\alpha \beta} v \del_\alpha \del_\beta u, P_1^{\alpha \beta} v \del_\alpha \del_\beta v$, and no null conditions are assumed. The small data global existence result to the system \eqref{eq:model-q} and the asymptotic behavior of the solution $(v, w)$ are now stated.

\begin{theorem}\label{thm:main2}
Consider the coupled wave and Klein-Gordon equations in \eqref{eq:model-q}, and let $N \geq 11$ be an integer. There exits $\eps_0 >0$, such that for all $\eps < \eps_0$, and all compactly supported initial data satisfying the smallness condition
\be 
\| v_0 \|_{H^{N+1}} + \| v_1 \|_{H^N}
+
\| w_0 \|_{H^{N+1}} + \| w_1 \|_{H^N}
\leq \eps,
\ee
the Cauchy problem \eqref{eq:model-q} admits a global-in-time solution $(v, w)$, which satisfies the following pointwise decay results 
\bel{eq:thm-decay}
|v(t, x)| \lesssim t^{-1},
\qquad
|\del \del w(t, x) | \lesssim s^{-1}.
\ee
\end{theorem}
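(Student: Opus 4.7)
The plan is to adapt the hyperboloidal-plus-ghost-weight bootstrap developed for the Klein-Gordon-Zakharov system \eqref{eq:model-KGZ2} in the proof of Theorem \ref{thm:main1}, exploiting the tight structural analogy between the two systems. The wave equation for $w$ has a source that is quadratic in the Klein-Gordon unknown $v$ and its derivatives, playing precisely the role of $(E^1)^2+(E^2)^2$ in \eqref{eq:n-12}; the Klein-Gordon equation for $v$ contains a term of schematic form $v\,\partial\partial w$, which is the analogue of $\Delta n^\Delta\,E^a = n\,E^a$ in \eqref{eq:model-KGZ2}. Although \eqref{eq:model-q} is genuinely quasilinear, the coefficient of every top-order wave derivative appearing on the right-hand sides is a Klein-Gordon quantity ($v$ or $\partial v$) of size $O(t^{-1})$ in $L^\infty$, so those terms act essentially as semilinear perturbations and do not alter the symbol of the wave energy estimate in any significant way.

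Concretely, the bootstrap is run on the hyperboloids $\Hcal_s$ with top-order assumptions
\be
E_1(s,\partial^I L^J v)^{1/2} + E_1(s,\partial \partial^I L^J v)^{1/2} \lesssim \eps\, s^\delta,
\qquad
E(s,\partial^I L^J w)^{1/2} \lesssim \eps\, s^\delta,
\ee
for $|I|+|J|\le N$, together with a lower-order uniform bound $E_1(s,\partial^I L^J v)^{1/2}\lesssim \eps$ for $|I|+|J|\le N-3$ needed to pointwise-control the quasilinear coefficients. The sharp pointwise decay $|v|\lesssim t^{-1}$ then follows from the Klein-Gordon Sobolev inequality on hyperboloids applied to the lower-order energy, while $|\partial\partial w|\lesssim s^{-1}$ is obtained by combining the identity \eqref{eq:ddu} (which trades a second derivative for a first-order $\partial L$ at the price of a $(t-r)^{-1}$ factor) with the new ghost-weight Sobolev-type inequality introduced for Theorem \ref{thm:main1}.

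The nonlinear term-by-term analysis is where the actual work lies. For the wave source $P_1^{\alpha\beta} v\,\partial_\alpha\partial_\beta v + P_2^{\alpha\beta\gamma}\partial_\gamma v\,\partial_\alpha\partial_\beta v$, every factor is a Klein-Gordon derivative whose $L^\infty$ decay is $t^{-1}$ (the $\partial\partial v$ factor being traded for $v$ and the nonlinearity itself via $-\Box v+v=\cdots$), so the $L^2$ norm of the source is $\lesssim t^{-1}$, non-integrable but matching exactly the rate that drives the $s^\delta$ top-order growth in the Klein-Gordon-Zakharov analysis. For the Klein-Gordon source the dangerous factor is $\partial\partial w$, which is replaced using \eqref{eq:ddu} by $(t-r)^{-1}$ times $\partial L w$-type terms whose $L^2$ norm is controlled by the natural wave energy of $w$; the extra $(t-r)^{-1}$ is then absorbed by the ghost-weight factor already present in the modified Klein-Gordon energy estimate. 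The final uniform (non-growing) decay $|v|\lesssim t^{-1}$ at lower order, rather than the a priori $t^{-1+\delta}$ that the $s^\delta$ energy would give, is obtained by the same refinement as in Proposition \ref{prop:refine}: move the good factor $s/t$ from the ghost-weight integrand onto the source and re-integrate.

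The principal obstacle is the appearance of top-order second wave derivatives $\partial^I L^J\partial\partial w$ in the Klein-Gordon equation for $v$ after commuting with $\partial^I L^J$, since $\|\partial\partial w\|_{L^2}$ is not a priori controlled by the natural wave energy of $w$ in dimension two. This is exactly the structural difficulty already faced and resolved in Theorem \ref{thm:main1} by the introduction of a new ghost-weighted wave energy which gains $t$-decay at the expense of losing $(t-r)$-decay. I expect the resolution to carry over: the $(t-r)$ loss is compensated by the gain from \eqref{eq:ddu}, and the quasilinearity $v\,\partial\partial w$, carrying a Klein-Gordon coefficient $v$ of size $O(t^{-1})$, is small enough to be absorbed into the bootstrap without any genuine modification of the energy functional beyond what was already introduced for \eqref{eq:model-KGZ}.
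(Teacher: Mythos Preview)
Your plan captures the low-order and pointwise parts of the argument correctly, but there is a genuine gap at the top order $|I|+|J|=N$. After commuting the $v$-equation with $\partial^I L^J$, the principal term on the right is $Q^{\alpha\beta}\,\partial_\alpha\partial_\beta\,\partial^I L^J w$, and you propose to treat it as a semilinear source. This does not close: the natural wave energy $E(s,\partial^I L^J w)$ controls only $\|(s/t)\,\partial\,\partial^I L^J w\|_{L^2_f}$, not $\|\partial\partial\,\partial^I L^J w\|_{L^2_f}$. Invoking \eqref{eq:ddu} trades $\partial\partial$ for $(t-r)^{-1}\partial L$, but $\partial L\,\partial^I L^J w$ now carries $N+1$ vector fields and is outside your bootstrap. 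Adding $E(s,\partial\,\partial^I L^J w)$ to the bootstrap only shifts the problem to the $w$-equation, where the source then contains $Q\,\partial\partial\partial\,\partial^I L^J v$, and you are chasing derivatives indefinitely. The Klein-Gordon smallness $|Q|\lesssim t^{-1}$ is not enough to absorb a term whose $L^2$ norm is simply uncontrolled.

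The paper handles this by a device you have not introduced: it exploits that the \emph{same} coefficient $Q^{\alpha\beta}=P_1^{\alpha\beta}v+P_2^{\alpha\beta\gamma}\partial_\gamma v$ multiplies $\partial_\alpha\partial_\beta w$ in the $v$-equation and $\partial_\alpha\partial_\beta v$ in the $w$-equation (after symmetrising $P_1,P_2$ in $\alpha,\beta$). Multiplying the two equations by $\partial_t v$ and $\partial_t w$ respectively and \emph{adding} produces a coupled quasilinear energy $E(s,v,w)$ with cross terms $\int_{\Hcal_s}Q^{0\beta}\partial_\beta v\,\partial_t w+\cdots\,dx$; the top-order second derivatives then combine into exact divergences, leaving only commutator-type terms with at least one derivative on $Q$ (Proposition~\ref{prop:EE-q}). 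The bound $|Q|\ll (s/t)^2$, which does follow from $|v|,|\partial v|\lesssim t^{-1}$, is used not to absorb a source but to show that this modified energy is equivalent to $E_1(s,v)+E(s,w)$ (Lemma~\ref{lem:equav}). The paper uses this coupled estimate for the high-order energies and reverts to the semilinear estimates \eqref{eq:w-EE2}, \eqref{eq:ghost-D} only at low order, where your outline is essentially correct; it also carries the pointwise bound $|\partial\partial\,\partial^I L^J w|\le C_1\eps\,s^{-1}(t-r)^{-1/2}$ for $|I|+|J|\le N-5$ as part of the bootstrap rather than deriving it afterwards.
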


We see from Theorem \ref{thm:main2} that the pointwise decay results for $v, \del \del w$ are sharp, which is thanks to the use of a minor different version of the ghost weight energy estimates \eqref{eq:ghost-D}. 

We note that almost the same analysis as the proof of Theorem \ref{thm:main1} also applies to Theorem \ref{thm:main2}, but in order to avoid ambiguities, we also provide the proof for Theorem \ref{thm:main2} in the appendix.



\subsection*{Outline}

The rest of this article is organised as follows.

In Section \ref{sec:BHFM}, we revisit some preliminaries on the wave equations, the hyperboloidal foliation method, and the commutator estimates. 
Then in Section \ref{sec:EE}, we introduce the energy estimates and prepare the Sobolev--type inequalities adapted to the energy estimates. 
Finally we prove Theorem \ref{thm:main1} and Theorem \ref{thm:main2} relying on the bootstrap method in Section \ref{sec:BA}, and in the appendix respectively.


\section{Preliminaries}
\label{sec:BHFM}
 
\subsection{Notations in the hyperboloidal foliation setting}
 
Working in the $(1+2)$ dimensional Minkowski spacetime, we adopt the signature $(-, +, +)$. Recall that the Greek letters $\alpha, \beta, \cdots \in \{0, 1, 2\}$ denote the spacetime indices, and Latin letters $a, b, \cdots \in \{1, 2\}$ represent the space indices, and the indices are raised or lowered by the Minkowski metric $\eta = \text{diag} (-1, 1, 1)$. A point in $\RR^{1+2}$ is denoted by $(x_0, x_1, x_2) = (t, x_1, x_2)$, and we denote its spacial radius by $r = \sqrt{x_1^2 + x_2^2}$. All of the functions considered are assumed to be supported in the cone $\Kcal = \{ (t, x) : t \geq |x| + 1 \}$ (since the solutions are supported in this region). A hyperboloid with hyperbolic time $s$ (with $s \geq 2$) is denoted by $\Hcal_s = \{ (t, x) : t^2 = |x|^2 + s^2 \}$. For a point $(t, x) \in \Hcal_s \bigcap \Kcal$, we emphasize the following important relations
\be 
t \geq |x| + 1,
\qquad
s \leq t \leq s^2.
\ee
Also we use $\Kcal_{[s_0, s_1]} := \{(t, x): s_0^2 \leq t^2- r^2 \leq s_1^2; r\leq t-1 \}$ to denote subsets of $\Kcal$ which are limited by two hyperboloids $\Hcal_{s_0}$ and $\Hcal_{s_1}$ with $s_0 \leq s_1$.

We next introduce some vector fields
\bei
\item Translations: $\del_\alpha$, \quad $\alpha = 0, 1, 2$.

\item Lorentz boosts: $L_a = x_a \del_t + t \del_a$, \quad $a = 1, 2$.

\item Scaling vector field: $L_0 = S = t \del_t + r \del_r$.

\eei
To adapt to the hyperboloidal foliation setting, we introduce the so-callled semi-hyperboloidal frame which is defined by
\bel{eq:semi-hyper}
\underdel_0:= \del_t, 
\qquad 
\underdel_a:= {L_a \over t} = {x^a\over t}\del_t+ \del_a.
\ee
On the other hand, the natural Cartesian frame can be expressed in terms of the semi-hyperboloidal frame as
\be 
\del_t = \underdel_0,
\qquad
\del_a = - {x^a \over t} \del_t + \underdel_a.
\ee


\subsection{Estimates for commutators}

We will need to frequently use the following estimates for commutators, which can be found in \cite{Sogge, PLF-YM-book}

\begin{lemma} \label{lem:comm2}
Let $u$ be a sufficiently nice function supported in $\Kcal = \{ (t, x) : t \geq |x| + 1\}$, then the following inequalities are valid ($a, b, m \in \{1, 2\}, \alpha, \beta \in \{0, 1, 2\}$)
\bel{eq:commu2} 
\aligned
\big|\del_\alpha L_a u \big|
&\lesssim
\big| L_a \del_\alpha u \big| + \sum_\beta \big| \del_\beta u \big|,
\\
\big|\del_\alpha \del_\beta L_a u \big|
&\lesssim
\big| L_a \del_\alpha \del_\beta u \big| + \sum_{\gamma, \gamma'} \big| \del_\gamma \del_{\gamma'} u \big|,
\\
\big| L_a L_b u \big|
&\lesssim
\big| L_b L_a u \big| + \sum_m \big| L_m u \big|,
\\
\big| \del_\alpha (s/t) \big|
&\lesssim
s^{-1},
\\
\big| L_a (s/t) \big|
+ \big| L_b L_a (s/t) \big| 
&\lesssim
s/t,
\\
\big| L_a (t-|x|)^{-\gamma} \big|
&\lesssim
(t-|x|)^{-\gamma},
\\
\big| L_b L_a \big( (s/t)  \big) \big|
&\lesssim (t-|x|)^{-\gamma} {t\over |x|}.
\endaligned
\ee
\end{lemma}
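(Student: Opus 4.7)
The plan is to verify each inequality by direct calculation, exploiting the explicit form of the Lorentz boosts $L_a = x_a \del_t + t \del_a$ and the support assumption $t \geq |x|+1$ (which in particular gives $|x_a|/t \leq 1$ and $r \leq t$). Since the estimates do not interact with each other, I would treat them as independent short computations and then collect them.

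First I would compute the commutator $[\del_\alpha, L_a]$. A direct application of the Leibniz rule shows that $[\del_t, L_a] = \del_a$ and $[\del_b, L_a] = \delta_{ab}\,\del_t$, which is linear in the Cartesian derivatives $\del_\beta u$; this is exactly the first inequality. Commuting the identity once more with another $\del_\beta$ and then iterating the procedure produces the second inequality for $\del_\alpha \del_\beta L_a u$, with remainder terms of the form $\del_\gamma \del_{\gamma'} u$. For the third inequality I would compute $L_a L_b u - L_b L_a u$ explicitly; the ``bulk'' terms $x_a x_b \del_t^2 u$, $t^2 \del_a \del_b u$ and mixed $t x_\cdot \del_t \del_\cdot u$ cancel by symmetry, leaving $[L_a, L_b]u = x_a \del_b u - x_b \del_a u$, which after substituting $t \del_b = L_b - x_b \del_t$ rewrites as $(x_a/t) L_b u - (x_b/t) L_a u$. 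Since $|x_a|/t \leq 1$ in $\Kcal$, the desired bound follows.

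For the weighted estimates I would use $s = \sqrt{t^2 - r^2}$ to compute derivatives directly. One gets $\del_t s = t/s$, $\del_a s = -x^a/s$, hence $\del_t(s/t) = r^2/(st^2)$ and $\del_a(s/t) = -x^a/(st)$; since $r \leq t$ in $\Kcal$, this bounds $|\del_\alpha(s/t)|$ by $1/s$. A clean simplification then gives $L_a(s/t) = -x_a\, s/t^2$, from which $|L_a(s/t)| \lesssim s/t$; iterating once (and using the previously established commutator bound between $L_a$ and $L_b$) yields the same size for $|L_b L_a(s/t)|$. The estimate on $(t-|x|)^{-\gamma}$ is similar: direct computation gives $L_a(t-r) = -x_a(t-r)/r$, so $L_a (t-r)^{-\gamma} = \gamma x_a (t-r)^{-\gamma}/r$, and $|x_a|/r \leq 1$ closes the bound.

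There is no real obstacle here; the result is a standard collection of commutator identities (see the references \textsc{Sogge}, \textsc{LeFloch--Ma}). The only bookkeeping care needed is to handle the factor $r$ in denominators carefully near the axis $r=0$, but this is harmless because the function $u$ is smooth and any $x_a/r$-type ratio arising in the computations is multiplied by a compensating power of $|x_a|$ or occurs only after the weighted object itself has been bounded. Accordingly the presentation can simply list the seven inequalities in order with a one-line computation each.
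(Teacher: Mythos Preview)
Your approach is correct and matches the paper's: it cites \textsc{Sogge} and \textsc{LeFloch--Ma} for the first five estimates and writes out only the last two by the same direct computation you describe.

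One point deserves correction. The seventh line of \eqref{eq:commu2} contains a typo: the left-hand side should be $\big|L_b L_a (t-|x|)^{-\gamma}\big|$, not $\big|L_b L_a(s/t)\big|$ (the latter is already covered in line five). The paper applies $L_b$ to the identity $L_a(t-r)^{-\gamma} = \gamma (t-r)^{-\gamma} x_a/|x|$ and obtains
\[
L_b L_a (t-|x|)^{-\gamma} = \gamma^2 (t-|x|)^{-\gamma}\frac{x_a x_b}{|x|^2} + \gamma (t-|x|)^{-\gamma}\frac{\delta_{ab}\,t}{|x|} - \gamma(t-|x|)^{-\gamma}\frac{x_a x_b\, t}{|x|^3}.
\]
The factor $t/|x|$ here is genuine and cannot be absorbed, so your remark that the $r$-denominators are ``harmless because \dots any $x_a/r$-type ratio \dots is multiplied by a compensating power of $|x_a|$'' is too optimistic for this particular estimate. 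The singular behaviour near $r=0$ is real, and it is precisely why the weighted Sobolev inequality \eqref{eq:Sobolev3} later requires a cut-off separating the region $\{|x| \gtrsim t\}$ from a neighbourhood of the axis.
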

\begin{proof}
We only provide the proof for the last two estimates, as the proof for other inequalities can be found in \cite{Sogge, PLF-YM-book}.

We directly compute
$$
\aligned
L_a (t-|x|)^{-\gamma}
=
&x_a \del_t (t-|x|)^{-\gamma} + t \del_a (t-|x|)^{-\gamma}
\\
=
-&\gamma  (t-|x|)^{-1-\gamma} x_a + \gamma t (t-|x|)^{-1-\gamma} {x_a \over |x|}
\\
=
-&\gamma (t-|x|)^{-1-\gamma} {x_a \over |x|} \big( |x| - t \big)
=
\gamma (t-|x|)^{-\gamma} {x_a \over |x|},
\endaligned
$$ 
which leads to
$$
\big| L_a (t-|x|)^{-\gamma} \big|
\leq \gamma (t-|x|)^{-\gamma}.
$$

Next, we act $L_b$ to the above identity to get ($\delta_{ab}$ is the Kronecker delta function)
$$
\aligned
L_b L_a (t-|x|)^{-\gamma}
=
\gamma^2 (t-|x|)^{-\gamma} {x_a x_b \over |x|^2}
+
\gamma (t-|x|)^{-\gamma} {\delta_{ab} t \over |x|}
-
\gamma (t-|x|)^{-\gamma} {x_a x_b t \over |x|^3}.
\endaligned
$$
Thus the proof is complete.
\end{proof}



\section{Energy estimates on hyperboloids}\label{sec:EE}

\subsection{Energy estimates}

We will introduce two kinds of energy estimates in this subsection: 
in the first kind of energy estimates two ways are shown on how to bound the natural energies for the wave components and the Klein-Gordon components; 
then the second kind of energy estimates allow us to bound the natural energies with some weights, and this is mainly motivated by the ghost weight method.

Let $\phi$ be a sufficiently nice function defined on a hyperboloid $\Hcal_s$, following \cite{PLF-YM-book} we define its natural energy $E_m$ (with three equivalent expressions) by
\bel{eq:2energy} 
\aligned
E_m(s, \phi)
&:=
 \int_{\Hcal_s} \Big( \big(\del_t \phi \big)^2+ \sum_a \big(\del_a \phi \big)^2+ 2 (x^a/t) \del_t \phi \del_a \phi + m^2 \phi ^2 \Big) \, dx
\\
               &= \int_{\Hcal_s} \Big( \big( (s/t)\del_t \phi \big)^2+ \sum_a \big(\underdel_a \phi \big)^2+ m^2 \phi^2 \Big) \, dx
                \\
               &= \int_{\Hcal_s} \Big( \big( \underdel_\perp \phi \big)^2+ \sum_a \big( (s/t)\del_a \phi \big)^2+ \sum_{a<b} \big( t^{-1}\Omega_{ab} \phi \big)^2+ m^2 \phi^2 \Big) \, dx,
 \endaligned
 \ee
in which $\Omega_{ab} := x^a\del_b- x^b\del_a$ are the rotation vector fields, and $\underdel_{\perp} := L_0/t = \del_t+ (x^a / t) \del_a$ is the orthogonal vector field. 
The above integral is defined by
\bel{eq:flat-int}
\int_{\Hcal_s}|\phi | \, dx 
=\int_{\RR^2} \big|\phi(\sqrt{s^2+r^2}, x) \big| \, dx,
\ee
and we denote
\be 
\| \phi \|_{L^p_f(\Hcal_s)}
=
\Big( \int_{\Hcal_s} |\phi|^p \, dx \Big)^{1/p},
\qquad
1\leq p < +\infty,
\ee
while $\| \phi \|_{L_f^\infty(\Hcal_s)} := \| \phi \|_{L^\infty(\Hcal_s)}$.
Note that the second and the third expressions in \eqref{eq:2energy} yield
$$
\big\| (s/t) \del \phi \big\|_{L^2_f(\Hcal_s)} + \sum_a \big\| \underdel_a \phi \big\|_{L^2_f(\Hcal_s)}
\lesssim
E_m(s, \phi)^{1/2}.
$$

\paragraph{Energy estimates I}

Now, we demonstrate the first energy estimates to the hyperboloidal setting.

\begin{proposition}[Energy estimates for wave-Klein-Gordon equations]
For $m \geq 0$ and for $s \geq s_0$ (with $s_0 = 2$), it holds both
\bel{eq:w-EE} 
E_m(s, \phi)^{1/2}
\leq 
E_m(s_0, \phi)^{1/2}
+ \int_2^s \big\| -\Box \phi + m^2 \phi \big\|_{L^2_f(\Hcal_{s'})} \, ds'
\ee
and 
\bel{eq:w-EE2} 
E_m(s, \phi)
\leq 
E_m(s_0, \phi)
+ \int_2^s \int_{\Hcal_{s'}} (s'/t) \big| \del_t \phi\big| \big| -\Box \phi + m^2 \phi \big| \, dxds'
\ee
for all sufficiently regular functions $\phi = \phi(t, x)$, which are defined and supported in $\Kcal_{[s_0, s]}$.
\end{proposition}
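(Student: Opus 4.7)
The plan is to use the standard multiplier method with $\partial_t\phi$ as multiplier, adapted to the hyperboloidal foliation. Both estimates ultimately flow from the same divergence identity, so I would derive that identity first and then read off the two bounds.

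\textbf{Step 1: the divergence identity.} Since the signature is $(-,+,+)$, we have $-\Box\phi + m^2\phi = \partial_t^2\phi - \Delta\phi + m^2\phi$. Multiplying by $\partial_t\phi$ and regrouping as a spacetime divergence gives
\[
\partial_t\phi\,(-\Box\phi+m^2\phi)
= \partial_t\!\left[\tfrac12\bigl((\partial_t\phi)^2+\textstyle\sum_a(\partial_a\phi)^2+m^2\phi^2\bigr)\right]
- \partial_a\bigl(\partial_t\phi\,\partial^a\phi\bigr),
\]
that is, $\partial_\alpha J^\alpha = \partial_t\phi\,(-\Box\phi+m^2\phi)$ for $J^0=\tfrac12((\partial_t\phi)^2+\sum_a(\partial_a\phi)^2+m^2\phi^2)$ and $J^a=-\partial_t\phi\,\partial^a\phi$.

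\textbf{Step 2: integration on $\Kcal_{[s_0,s]}$ and boundary fluxes.} I would apply the divergence theorem on the region $\Kcal_{[s_0,s]}$ bounded by $\Hcal_{s_0}$, $\Hcal_s$, and part of the cone $\{t=|x|+1\}$. The lateral contribution vanishes because $\phi$ is supported in $\Kcal$. On $\Hcal_{s'}$, parametrized by $x\in\RR^2$ with $t=\sqrt{(s')^2+|x|^2}$, the flux integrand of $J^\alpha$ against the (unnormalised) conormal $(1,-x^a/t)$ produces
\[
\tfrac12\bigl((\partial_t\phi)^2+\textstyle\sum_a(\partial_a\phi)^2+m^2\phi^2\bigr)+\sum_a(x^a/t)\,\partial_t\phi\,\partial_a\phi,
\]
which is exactly $\tfrac12$ of the integrand defining $E_m(s',\phi)$ in the first line of \eqref{eq:2energy}. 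Matching top- and bottom-boundary signs yields
\[
\tfrac12 E_m(s,\phi)-\tfrac12 E_m(s_0,\phi)=\iint_{\Kcal_{[s_0,s]}}\partial_t\phi\,(-\Box\phi+m^2\phi)\,dt\,dx.
\]

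\textbf{Step 3: deducing \eqref{eq:w-EE2}.} I would foliate the region by hyperboloids, using the change of variables $(t,x)\mapsto(s',x)$ with $t=\sqrt{(s')^2+|x|^2}$, whose Jacobian is $\partial t/\partial s'=s'/t$. This converts the spacetime integral into
\[
\int_{s_0}^{s}\int_{\Hcal_{s'}}(s'/t)\,\partial_t\phi\,(-\Box\phi+m^2\phi)\,dx\,ds'.
\]
Bounding the absolute value and absorbing the factor $\tfrac12$ on both sides gives \eqref{eq:w-EE2} directly.

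\textbf{Step 4: deducing \eqref{eq:w-EE}.} Differentiating the identity in Step 2 in $s$ produces $\tfrac12\frac{d}{ds}E_m(s,\phi)=\int_{\Hcal_s}(s/t)\,\partial_t\phi\,(-\Box\phi+m^2\phi)\,dx$. By Cauchy--Schwarz on $L^2_f(\Hcal_s)$ together with $\|(s/t)\partial_t\phi\|_{L^2_f(\Hcal_s)}\le E_m(s,\phi)^{1/2}$ (which is visible from the second expression for $E_m$ in \eqref{eq:2energy}), we obtain $\tfrac{d}{ds}E_m(s,\phi)\le 2\,E_m(s,\phi)^{1/2}\|-\Box\phi+m^2\phi\|_{L^2_f(\Hcal_s)}$, i.e.\ $\frac{d}{ds}E_m(s,\phi)^{1/2}\le \|-\Box\phi+m^2\phi\|_{L^2_f(\Hcal_s)}$. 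Integrating in $s$ from $s_0$ to $s$ yields \eqref{eq:w-EE}. The only slightly delicate point, which I would treat carefully, is the identification of the boundary flux with $\tfrac12 E_m$ in Step 2 and the Jacobian $s'/t$ in Step 3; the rest is bookkeeping.
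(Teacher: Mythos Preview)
Your proposal is correct and follows exactly the standard multiplier argument that the paper defers to (it cites \cite{PLF-YM-cmp} rather than giving its own proof): multiply by $\partial_t\phi$, integrate over $\Kcal_{[s_0,s]}$, identify the hyperboloid flux with $\tfrac12 E_m$, and use the Jacobian $dt\,dx=(s'/t)\,ds'\,dx$; this is precisely the computation the paper carries out explicitly in the proof of Proposition~\ref{prop:ghost}. One small caveat on Step~3: clearing the factor $\tfrac12$ from the identity $\tfrac12 E_m(s)-\tfrac12 E_m(s_0)=\iint\cdots$ produces a factor $2$ in front of the spacetime integral (compare the constants in \eqref{eq:ghost-D}), so you actually obtain \eqref{eq:w-EE2} up to that harmless constant rather than exactly as stated.
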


One refers to \cite{PLF-YM-cmp} for the proof. A side remark is that the energy estimates \eqref{eq:w-EE2} allows us to show the uniform energy bounds for the $E$ component, while the energy estimates \eqref{eq:w-EE} cannot achieve this, and the details can be found in the proof of Proposition \ref{prop:refine}.

\paragraph{Energy estimates II}

To proceed, we introduce a minor different version of the ghost weight energy estimates for the wave equations, which can help compensate the loss of $t$ decay by the (less important in many cases) loss of $t-r$ decay. Roughly speaking, the version of ghost weight energy estimates below allows us to show 
$$
| \del n^\Delta | \lesssim s^{-1} (t-r)^{2\delta}
$$
from
$$
| \del n^\Delta | \lesssim s^{-1 + \delta}.
$$

Consider the wave equation
\bel{eq:u-equation}
-\Box u = f,
\ee
and recall that in the original ghost weight method by Alinhac, the multiplier used is 
$$
e^{\arctan (r-t)} \del_t u,
$$
which does not contribute to the right hand side compared with the usual multiplier $\del_t u$, and this is because $e^{\arctan (r-t)}$ is only as good as a constant. However, we find that if we instead use 
$$
(t-r)^{-\gamma} \del_t u
$$
as the multiplier, the right hand side can benefit from the $(t-r)^{-\gamma}$ factor. And thanks to the contribution of the factor $(t-r)^{-\gamma}$, the original non-integrable quantity might turn to the integrable quantity $s^{-1-\gamma'}$ with $\gamma' >0$. Note that we are allowed to benefit from the $t-r$ decay because the functions considered are supported in the region $t-r \geq 1$.

\begin{proposition}\label{prop:ghost}
Consider the wave equation \eqref{eq:u-equation} and assume $u$ is supported in $\Kcal = \{(t, x): |x|\leq t - 1\}$, then we have the following version of ghost weight energy estimates
\bel{eq:ghost-D}
\int_{\Hcal_s} (t-r)^{-\gamma} \big(s/t \big)^2 \big| \del u \big|^2 \, dx
\leq
2 E(s_0, u)
+
4 \int_{s_0}^s \int_{\Hcal_{s'}} (s'/t) (t-r)^{-\gamma} f \del_t u \, dxds',
\ee
in which $\gamma > 0$.
\end{proposition}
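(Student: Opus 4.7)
My plan is to apply the standard weighted energy identity with the multiplier $(t-r)^{-\gamma}\del_t u$, then integrate over the hyperboloidal slab $\Kcal_{[s_0, s]}$, discard a ghost-weight bulk term of favorable sign, and close with a pointwise algebraic inequality that upgrades the resulting weighted hyperboloidal energy density into the target quantity $(t-r)^{-\gamma}(s/t)^2|\del u|^2$.

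First I would introduce the energy current with components $X^0 := \tfrac{1}{2}(t-r)^{-\gamma}\bigl((\del_t u)^2 + \sum_a (\del_a u)^2\bigr)$ and $X^a := -(t-r)^{-\gamma}\del_t u\, \del_a u$. Using $-\Box u = \del_t^2 u - \Delta u = f$ together with $\del_t(t-r)^{-\gamma} = -\gamma(t-r)^{-\gamma-1}$ and $\del_a(t-r)^{-\gamma} = \gamma(t-r)^{-\gamma-1}\, x^a/r$, a direct computation will yield
\begin{equation*}
\del_t X^0 + \del_a X^a = (t-r)^{-\gamma}\, f\, \del_t u - \frac{\gamma}{2}(t-r)^{-\gamma-1}\Bigl[\bigl(\del_t u + (x^a/r)\del_a u\bigr)^2 + \textstyle\sum_a (\del_a u)^2 - \bigl((x^a/r)\del_a u\bigr)^2\Bigr],
\end{equation*}
where the bracketed expression is pointwise non-negative (the residual being the squared angular part of $\nabla u$). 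Integrating over $\Kcal_{[s_0, s]}$ and applying the hyperboloidal divergence formula (no side-flux occurs because $u$ is supported in $\Kcal$), then changing variables by $dxdt = (s'/t)\, dxds'$, will produce the basic identity.

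The flux density through $\Hcal_{s'}$ is
\begin{equation*}
X^0 - (x^a/t)X^a = \tfrac12 (t-r)^{-\gamma}\Bigl[(\del_t u)^2 + \textstyle\sum_a(\del_a u)^2 + 2(x^a/t)\del_t u\, \del_a u\Bigr] = \tfrac12 (t-r)^{-\gamma}\Bigl[(s'/t)^2(\del_t u)^2 + \textstyle\sum_a(\underdel_a u)^2\Bigr],
\end{equation*}
where I have used the equivalent expression of the energy density in \eqref{eq:2energy}. Dropping the non-positive ghost-weight bulk contribution, and bounding the initial flux by $\tfrac12 E(s_0, u)$ thanks to $(t-r)^{-\gamma}\leq 1$ on $\Hcal_{s_0}$ (recall $t-r\geq 1$ on the support of $u$ and $\gamma>0$), I would arrive at
\begin{equation*}
\int_{\Hcal_s} (t-r)^{-\gamma}\Bigl[(s/t)^2 (\del_t u)^2 + \textstyle\sum_a (\underdel_a u)^2\Bigr]\, dx \leq E(s_0, u) + 2\int_{s_0}^s \int_{\Hcal_{s'}} (s'/t)(t-r)^{-\gamma}\, f\, \del_t u\, dxds'.
\end{equation*}

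The final ingredient, and the main technical subtlety, is the pointwise algebraic inequality
\begin{equation*}
(s/t)^2 |\del u|^2 \leq 2 \Bigl[(s/t)^2 (\del_t u)^2 + \textstyle\sum_a (\underdel_a u)^2\Bigr],
\end{equation*}
which is what produces the precise factors $2$ and $4$ in \eqref{eq:ghost-D}. The plan to establish it is: use the identity $(s/t)^2(\del_t u)^2 + \sum_a(\underdel_a u)^2 = |\del u|^2 + 2(x^a/t)\del_t u\, \del_a u$; bound the cross term by $|2(x^a/t)\del_t u\, \del_a u|\leq (r/t)|\del u|^2$ via Cauchy--Schwarz, yielding a lower bound of $(1-r/t)|\del u|^2$; and finally invoke the geometric identity $t-r = s^2/(t+r)\geq s^2/(2t)$, hence $(t-r)/t \geq (s/t)^2/2$. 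Multiplying the previous display by $2$ then gives exactly \eqref{eq:ghost-D}.
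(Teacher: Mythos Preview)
Your proof is correct and follows essentially the same route as the paper: multiply by $(t-r)^{-\gamma}\del_t u$, drop the non-negative bulk term, integrate over $\Kcal_{[s_0,s]}$, and use $(t-r)^{-\gamma}\le 1$ on the initial hyperboloid. The only cosmetic difference is in the last step: the paper extracts $(s/t)^2|\del u|^2$ by combining the second and third equivalent forms of the energy density in \eqref{eq:2energy} (one gives $(s/t)^2(\del_t u)^2$, the other $(s/t)^2\sum_a(\del_a u)^2$, and summing costs a factor $2$), whereas you reach the same factor $2$ by bounding the cross term $2(x^a/t)\del_t u\,\del_a u$ directly and invoking $(t-r)/t\ge \tfrac12(s/t)^2$.
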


\begin{proof}
Multiplying on both sides of \eqref{eq:u-equation} with $(t-r)^{-\gamma} \del_t u$, we get
$$
\aligned
&-\Box u \cdot (t-r)^{-\gamma} \del_t u
=
f \cdot (t-r)^{-\gamma} \del_t u 
\\
=
&{1\over 2} \del_t \Big( (t-r)^{-\gamma} \big( (\del_t u)^2 + \sum_a (\del_a u)^2 \big) \Big)
- \del_a \Big( (t-r)^{-\gamma} \del^a u \del_t u \Big)
\\
+
&{\gamma \over 2} (t-r)^{-1-\gamma} \sum_a \big( (x_a /r) \del_t u + \del_a u \big)^2,
\endaligned
$$
and the observation that
$$
{\gamma \over 2} (t-r)^{-1-\gamma} \sum_a \big( (x_a /r) \del_t u + \del_a u \big)^2
\geq 0
$$
further gives
$$
{1\over 2} \del_t \Big( (t-r)^{-\gamma} \big( (\del_t u)^2 + \sum_a (\del_a u)^2 \big) \Big)
- \del_a \Big( (t-r)^{-\gamma} \del^a u \del_t u \Big)
\leq 
f \cdot (t-r)^{-\gamma} \del_t u.
$$
Then we integrate it over the region $\Kcal_{[s_0, s]}$ to get
$$
\aligned
&\int_{\Kcal_{[s_0, s]}} {1\over 2} \del_t \Big( (t-r)^{-\gamma} \big( (\del_t u)^2 + \sum_a (\del_a u)^2 \big) \Big)
- \del_a \Big( (t-r)^{-\gamma} \del^a u \del_t u \Big) \, dxdt
\\
\leq 
&\int_{\Kcal_{[s_0, s]}} f \cdot (t-r)^{-\gamma} \del_t u \, dxdt.
\endaligned
$$
Note the out unit normal to the hyperboloid $\Hcal_s$ is $(t, -x^a) \cdot (t^2 + |x|^2)^{-1/2}$ and $d\Hcal_s = t^{-1/2} (t^2 + |x|^2)^{1/2} dx$, and the Stokes formula yields
$$
\aligned
&\int_{\Hcal_s} (t-r)^{-\gamma} \Big( \big(\del_t u \big)^2+ \sum_a \big(\del_a u \big)^2+ 2 (x^a/t) \del_t u \del_a u \Big) \, dx
\\
\leq
&\int_{\Hcal_{s_0}} (t-r)^{-\gamma} \Big( \big(\del_t u \big)^2+ \sum_a \big(\del_a u \big)^2+ 2 (x^a/t) \del_t u \del_a u \Big) \, dx
+
2 \int_{s_0}^s \int_{\Hcal_{s'}} (s'/t) f \cdot (t-r)^{-\gamma} \del_t u \, dxds',
\endaligned
$$
in which we also used the relation $dxdt = (s/t) dxds$.

Finally, recalling those three equivalent expressions for the energy $E (s, u)$ in \eqref{eq:2energy} finishes the proof.
\end{proof}

Note that the energy estimates in \eqref{eq:ghost-D} exclude the contribution of the positive spacetime integral of the derivatives $(x_a/|x|) \del_t u + \del_a u$, which is heavily relied on in the original ghost weight method. The reason why we do not include that contribution in the energy estimates is that: 1) in the hyperboloidal foliation setting, the derivatives $(x_a/|x|) \del_t + \del_a$ do not seem to be so good (do not seem to be as good as $\underdel_a$); 2) in the models of interest \eqref{eq:model-KGZ} and \eqref{eq:model-q}, there do not exist any null structures.

We will see later that this proposition plays a vital role in showing 
$$
| \del \del n^\Delta | \lesssim s^{-1},
\qquad
(\text{and }  | \del \del w| \lesssim s^{-1},)
$$
for the equations \eqref{eq:model-KGZ2} (and \eqref{eq:model-q}), and this is necessary in order to close the bootstrap argument.


\subsection{Sobolev-type inequalities}

We now state a Sobolev-type inequality adapted to the hyperboloidal foliation setting, which will be used to obtain pointwise estimates for both wave and Klein-Gordon components. The Sobolev-type inequalities on hyperboloids have been proved by Klainerman, H{\"o}rmander, and LeFloch-Ma. For the proof of the one given right below, one refers to either \cite{Sogge} or \cite{PLF-YM-book, PLF-YM-cmp} for details.

\begin{lemma} \label{lem:sobolev}
Let $u= u(t, x)$ be sufficient smooth and be supported in $\{(t, x): |x|\leq t - 1\}$ and let $s \geq 2$, then it holds 
\bel{eq:Sobolev1}
\sup_{\Hcal_s} \big| t u(t, x) \big|  
\lesssim \sum_{| J |\leq 2} \big\| L^J u \big\|_{L^2_f(\Hcal_s)},
\ee
with $L$ the Lorentz boosts and $J$ the multi-index. 
\end{lemma}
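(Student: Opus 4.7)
The plan is to reduce the inequality to the standard $2$-dimensional Sobolev embedding $H^2(\RR^2)\hookrightarrow L^\infty(\RR^2)$ by pulling $u$ back from the hyperboloid $\Hcal_s$ to flat Euclidean space via its natural parameterization. Concretely, I introduce
$$
v(x) := u\big(\sqrt{s^2+|x|^2},\, x\big),\qquad x \in \RR^2,
$$
so that by the very definition of the norm $\| \cdot \|_{L^2_f(\Hcal_s)}$ in \eqref{eq:flat-int}, pointwise values, $L^p$ norms and spatial supports of $v$ and $u$ coincide. A direct chain-rule computation gives
$$
\del_a v(x) = \del_a u + \tfrac{x_a}{t}\del_t u = \tfrac{1}{t}\, L_a u,
$$
and iterating,
$$
\del_a \del_b v = \tfrac{1}{t^2} L_a L_b u - \tfrac{x_a}{t^3} L_b u,
$$
so, on the restriction to $\Hcal_s$, the Cartesian derivatives $\del^\alpha v$ for $|\alpha|\leq 2$ are controlled, modulo a factor of $t^{-|\alpha|}$ and harmless lower-order rearrangements, by combinations of $L^J u$ with $|J|\leq |\alpha|$.

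Fix $(t_0, x_0) \in \Hcal_s$; the goal is to bound $t_0 |v(x_0)|$. I apply the localized $H^2(\RR^2)\hookrightarrow L^\infty$ inequality on the Euclidean ball $B_R(x_0) \subset \RR^2$ for a radius $R$ to be chosen. A standard rescaling argument from Sobolev on the unit ball yields
$$
|v(x_0)|^2
\lesssim R^{-2} \|v\|_{L^2(B_R(x_0))}^2 + \|\nabla v\|_{L^2(B_R(x_0))}^2 + R^2 \|\nabla^2 v\|_{L^2(B_R(x_0))}^2.
$$
I then pick $R = c\, t_0$ with a small universal constant $c>0$. The key geometric fact to verify is that $t(x) := \sqrt{s^2 + |x|^2}$ satisfies $t(x) \approx t_0$ uniformly on $B_R(x_0)$; this follows by splitting into the cases $|x_0| \leq s$ (so $t_0 \approx s$ and hence $t \geq s \gtrsim t_0$) and $|x_0| > s$ (so $t_0 \approx |x_0|$ and choosing $c$ small enough guarantees $|x| \approx |x_0|$, whence $t \approx t_0$).

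With $t \approx t_0 \approx R$ on $B_R(x_0)$, the identities for $\nabla v$ and $\nabla^2 v$ yield $R^k |\nabla^k v| \lesssim \sum_{|J|\leq k} |L^J u|$ pointwise, and the localized Sobolev inequality becomes
$$
t_0^2 |v(x_0)|^2
\lesssim \sum_{|J|\leq 2} \|L^J u\|_{L^2(B_R(x_0))}^2
\leq \sum_{|J|\leq 2} \|L^J u\|_{L^2_f(\Hcal_s)}^2;
$$
taking the square root together with the supremum over $(t_0, x_0) \in \Hcal_s$ produces exactly \eqref{eq:Sobolev1}. The single step requiring real care is verifying $t(x) \approx t_0$ uniformly on $B_R(x_0)$, since the hyperboloid is essentially flat in the interior region $|x| \lesssim s$ but bends sharply toward the light cone for $|x| \gg s$; the case split above accounts for both regimes, and once that is in hand the rest is algebra plus one application of the flat Sobolev embedding.
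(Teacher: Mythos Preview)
Your proof is correct and is essentially the standard argument found in the references the paper cites (Sogge, LeFloch--Ma); the paper itself does not give a proof of this lemma but simply defers to those references. Your key observations --- that the pullback $v(x)=u(\sqrt{s^2+|x|^2},x)$ satisfies $\del_a v = t^{-1}L_a u$, and that a localized $H^2\hookrightarrow L^\infty$ on a ball of radius $\approx t_0$ together with the comparability $t(x)\approx t_0$ on that ball yields the factor of $t_0$ --- are exactly the ingredients of the reference proofs, and your case split handling the geometry near and away from the cone tip is clean.
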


Combine with the estimates for commutators in Lemma \ref{lem:comm2}, we also have the following more practical versions of Sobolev inequalities.
\begin{lemma}\label{lem:sobolev2}
Under the same assumptions as in Lemma \ref{lem:sobolev}, we have
\bel{eq:Sobolev2}
\sup_{\Hcal_s} \big| s \hskip0.03cm u(t, x) \big| 
\lesssim \sum_{| J |\leq 2} \big\| (s/t) L^J u \big\|_{L^2_f(\Hcal_s)},
\ee
and 
\bel{eq:Sobolev3}
\sup_{\Hcal_s} \big| s (t-r)^{-\gamma} \hskip0.03cm u(t, x) \big| 
\lesssim \sum_{| J |\leq 2} \big\| (s/t) (t-r)^{-\gamma} L^J u \big\|_{L^2_f(\Hcal_s)},
\ee
\end{lemma}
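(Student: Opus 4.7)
The plan is to reduce both inequalities in Lemma \ref{lem:sobolev2} to the basic Sobolev-type inequality \eqref{eq:Sobolev1} of Lemma \ref{lem:sobolev}, by applying the latter to an auxiliary function in which the desired weight has been absorbed, and then expanding the $L^J$ of a product by the Leibniz rule and controlling each weight factor using the commutator estimates of Lemma \ref{lem:comm2}.

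For the estimate \eqref{eq:Sobolev2}, I would set $v := (s/t)\, u$. Since $s$ is constant on $\Hcal_s$, one has $t\, v = s\, u$ pointwise on $\Hcal_s$, so that $\sup_{\Hcal_s}|s\, u| = \sup_{\Hcal_s}|t\, v|$. Applying Lemma \ref{lem:sobolev} to $v$ (noting $v$ inherits the support property) gives
\[
\sup_{\Hcal_s}|t\, v| \lesssim \sum_{|J|\leq 2} \big\| L^J v \big\|_{L^2_f(\Hcal_s)}.
\]
I would then expand $L^J v = L^J\big( (s/t)\, u\big)$ by Leibniz and invoke the bounds $|L^K (s/t)| \lesssim s/t$ for $|K| \leq 2$ from Lemma \ref{lem:comm2} to obtain
\[
|L^J v| \lesssim \sum_{|K|\leq |J|} (s/t)\, |L^K u|,
\]
yielding \eqref{eq:Sobolev2} directly.

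For \eqref{eq:Sobolev3}, I would repeat the same procedure with the auxiliary function $v := (s/t)(t-r)^{-\gamma}\, u$, so that $t\, v = s\, (t-r)^{-\gamma}\, u$ on $\Hcal_s$. Again by Lemma \ref{lem:sobolev} and Leibniz,
\[
\sup_{\Hcal_s}\big|s\,(t-r)^{-\gamma}\, u\big| \lesssim \sum_{|J|\leq 2}\big\| L^J\big((s/t)(t-r)^{-\gamma}\, u\big)\big\|_{L^2_f(\Hcal_s)},
\]
and each $L^J$ on the right is expanded as a sum of products $L^{J_1}(s/t)\cdot L^{J_2}(t-r)^{-\gamma}\cdot L^{J_3} u$ with $|J_1|+|J_2|+|J_3|\leq |J|$. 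The weight factor $L^{J_1}(s/t)$ is controlled by $s/t$ and $L^{J_2}(t-r)^{-\gamma}$ by $(t-r)^{-\gamma}$ (up to harmless constants) via Lemma \ref{lem:comm2}. Collecting these bounds gives the required right-hand side.

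The proof is essentially mechanical once the commutator machinery of Lemma \ref{lem:comm2} is in place; the only delicate point to verify is that every $L$-derivative acting on either of the weights $(s/t)$ and $(t-r)^{-\gamma}$ reproduces the same weight (times a bounded quantity), so that no spurious factors escape into the right-hand side. This is exactly the content of the weight estimates collected in Lemma \ref{lem:comm2}, and no other input is needed.
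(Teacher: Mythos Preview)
Your argument for \eqref{eq:Sobolev2} is correct and matches the paper's proof exactly.

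For \eqref{eq:Sobolev3}, however, there is a genuine gap. You assert that $L^{J_2}(t-r)^{-\gamma}$ is controlled by $(t-r)^{-\gamma}$ for all $|J_2|\le 2$ ``via Lemma \ref{lem:comm2}'', but that lemma does \emph{not} give this for $|J_2|=2$. The computation in its proof shows
\[
L_b L_a (t-r)^{-\gamma}
= \gamma^2 (t-r)^{-\gamma}\frac{x_a x_b}{|x|^2}
+ \gamma (t-r)^{-\gamma}\frac{\delta_{ab}\,t}{|x|}
- \gamma (t-r)^{-\gamma}\frac{x_a x_b\,t}{|x|^3},
\]
so the correct bound carries an extra factor $t/|x|$, which blows up near $x=0$ (a point that lies inside the support region $\Kcal$). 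Equivalently, $r=|x|$ is not differentiable at the origin, so the auxiliary function $v=(s/t)(t-r)^{-\gamma}u$ fails to be ``sufficiently smooth'' for Lemma \ref{lem:sobolev} to apply directly, and the Leibniz expansion produces an uncontrolled term.

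The paper repairs exactly this point by introducing a smooth cut-off $\phi=\phi_0(s/t)$ separating the region $\{t\le 3|x|\}$ (where $t/|x|\lesssim 1$, so your argument goes through) from the region $\{t\ge 2|x|\}$ (where $t-r\simeq t$, so one may replace $(t-r)^{-\gamma}$ by the genuinely smooth weight $t^{-\gamma}$, for which the second-order boost bounds hold without the $t/|x|$ factor). Without such a splitting, your reduction does not close.
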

\begin{proof}
To prove \eqref{eq:Sobolev2}, we first note
$$
\sup_{\Hcal_s} \big| s \hskip0.03cm u(t, x) \big| 
=
\sup_{\Hcal_s} \big| t \hskip0.03cm (s/t) u(t, x) \big|,
$$
then \eqref{eq:Sobolev2} follows from the commutator estimates in \eqref{eq:commu2}
$$
\big| L_\alpha (s/t) \big|
+ \big| L_b L_a (s/t) \big| 
\lesssim
s/t.
$$

Next, we show \eqref{eq:Sobolev3}, and due to the fact that $(t-r)^{-\gamma}$ is not sufficiently smooth, its proof is a little bit more complicated.
We introduce the smooth cut-off function
\begin{eqnarray}
\phi_0 (p)
=
\left\{
\begin{array}{lll}
&
0,
\qquad
&  p < {\sqrt{3}\over 2},
\\
&
1,
\qquad
& p > {2\sqrt{2} \over 3}.
\end{array}
\right.
\end{eqnarray}
Let
$$
\phi (t, x)
=
\phi_0 (s/t),
$$
and we observe that
\begin{eqnarray}
\phi (t, x)
=
\left\{
\begin{array}{lll}
&
0,
\qquad
&  t <  2 |x|,
\\
&
1,
\qquad
& t > 3 |x|.
\end{array}
\right.
\end{eqnarray}
In addition, we also find
$$
\sum_{|J| \leq 2} \big| L^J \phi(t, x) \big|
\lesssim 1.
$$

The simple triangle inequality gives
$$
\sup_{\Hcal_s} \big| s (t-r)^{-\gamma} \hskip0.03cm u(t, x) \big| 
\leq 
\sup_{\Hcal_s} \big| \phi(t, x) s (t-r)^{-\gamma} \hskip0.03cm u(t, x) \big| 
+
\sup_{\Hcal_s} \big| \big(1- \phi(t,x)\big) s (t-r)^{-\gamma} \hskip0.03cm u(t, x) \big|, 
$$
which means it suffices to show 
\bel{eq:suffice}
\aligned
\sup_{\Hcal_s} \big| \phi(t, x) s t^{-\gamma} \hskip0.03cm u(t, x) \big| 
&\lesssim \sum_{| J |\leq 2} \big\| (s/t) t^{-\gamma} L^J u \big\|_{L^2_f(\Hcal_s)},
\\
\sup_{\Hcal_s} \big| \big(1- \phi(t,x)\big) s (t-r)^{-\gamma} \hskip0.03cm u(t, x) \big|
&\lesssim \sum_{| J |\leq 2} \big\| (s/t) (t-r)^{-\gamma} L^J u \big\|_{L^2_f(\Hcal_s)},
\endaligned
\ee
in which we used the relation that $t \lesssim t-r \lesssim t$ within the support of $\phi(t, x)$.
The second estimate can be seen from the commutator estimates in \eqref{eq:commu2}
$$
\big| L_a (t-|x|)^{-\gamma} \big|
\lesssim
(t-|x|)^{-\gamma},
\qquad
\big| L_b L_a \big( (s/t)  \big) \big|
\lesssim (t-|x|)^{-\gamma} {t\over |x|},
$$
as well as the fact that
$$
t \lesssim |x|
$$
holds within the support of $1-\phi(t, x)$ which is $\{(t, x) : |x| \geq t/3 \}$.
In order to show the first estimate in \eqref{eq:suffice}, we compute
$$
L_a t^{-\gamma}
=
-\gamma t^{-\gamma} {x_a \over t},
\qquad
L_b L_a t^{-\gamma}
=
\gamma (1+\gamma) t^{-\gamma} {x_a x_b \over t^2} 
- 
\gamma t^{-\gamma} \delta_{ab},
$$
and these imply the first estimate in \eqref{eq:suffice}.

The proof is complete.
\end{proof}


\section{Bootstrap argument}
\label{sec:BA}

\subsection{Bootstrap assumptions and its consequences}\label{subsec:BA}

We will work on the equations in \eqref{eq:model-KGZ2}, and for easy readability we copy \eqref{eq:model-KGZ2}
$$
\aligned
&-\Box E^a + E^a = \Delta n^\Delta E^a,
\qquad
&\big( E^a, \del_t E^a \big)(t_0)
= (E^a_0, E^a_1),
\\
&-\Box n^\Delta = (E^1)^2 + (E^2)^2,
\qquad
&\big( n^\Delta, \del_t n^\Delta  \big) (t_0)
= (n^\Delta_0, n^\Delta_0).
\endaligned
$$

We assume that it holds for $s \in [s_0, s_1)$
\bel{eq:BA-KGZ}
\aligned
E_1 (s, \del^I L^J E)^{1/2}
+
E_1 (s, \del \del^I L^J E)^{1/2}
&\leq C_1 \eps s^\delta,
\qquad
&|I| + |J| \leq N,
\\
E_1 (s, \del^I L^J E)^{1/2}
&\lesssim C_1 \eps,
\qquad
&|I| + |J| \leq N-3.
\endaligned
\ee
In the above, $0 <\delta< 1/24$, $C_1 > 1$ is a large number to be determined satisfying $C_1 \eps \ll \delta$, and $s_1$ is defined by
\be 
s_1 = \sup \{ s : s > s_0, \,\eqref{eq:BA-KGZ} \,\, holds \}.
\ee
In what follows, we first assume $s_1 > s_0$ is a finite number, and we then derive some contradiction to assert that $s_1 = + \infty$, so that we have the global existence result.

As a consequence of the bootstrap assumptions, we have the following estimates.

\begin{lemma}
Assume the bounds in \eqref{eq:BA-KGZ} are valid, then for all $s \in [s_0, s_1)$ it holds the following $L^2$ norm estimates
\bel{eq:BA-E-L2}
\aligned
\big\| \del^I L^J E, \del \del^I L^J E \big\|_{L^2_f(\Hcal_s)} 
+
\big\| (s/t) \del \del \del^I L^J E \big\|_{L^2_f(\Hcal_s)}
&\lesssim C_1 \eps s^\delta,
\quad
&|I| + |J| \leq N,
\\
\big\|\del^I L^J E \big\|_{L^2_f(\Hcal_s)} + \big\| (s/t) \del \del^I L^J E \big\|_{L^2_f(\Hcal_s)}
&\lesssim C_1 \eps,
\quad
&|I| + |J| \leq N-3,
\endaligned
\ee
as well as the following pointwise estimates
\bel{eq:BA-E-L00} 
\aligned
|\del^I L^J E |
&\lesssim C_1 \eps t^{-1} s^\delta,
\qquad
&|I| + |J| \leq N-2,
\\
\big|\del^I L^J E \big|
&\lesssim C_1 \eps t^{-1},
\qquad
&|I| + |J| \leq N-5.
\endaligned
\ee
\end{lemma}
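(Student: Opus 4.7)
The plan is to derive everything directly from the three equivalent expressions for the natural energy $E_1$ in \eqref{eq:2energy}, together with the Sobolev inequality of Lemma \ref{lem:sobolev} and the commutator estimates of Lemma \ref{lem:comm2}; no PDE analysis is involved at this stage, since we only need to unpack the bootstrap assumptions.

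First I would read off the $L^2$ estimates \eqref{eq:BA-E-L2} from \eqref{eq:2energy}. With $m=1$, the mass term $m^2\phi^2$ in the first expression gives $\|\phi\|_{L^2_f(\Hcal_s)}\lesssim E_1(s,\phi)^{1/2}$, while the second expression yields $\|(s/t)\del\phi\|_{L^2_f(\Hcal_s)}\lesssim E_1(s,\phi)^{1/2}$. Applied to $\phi=\del^I L^J E$, these two inequalities produce the bounds on $\|\del^I L^J E\|_{L^2_f}$ and $\|(s/t)\del\del^I L^J E\|_{L^2_f}$ at their respective orders. Applied to $\phi=\del\del^I L^J E$, they produce the bounds on $\|\del\del^I L^J E\|_{L^2_f}$ and $\|(s/t)\del\del\del^I L^J E\|_{L^2_f}$ at the top order. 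Combining both cases for $|I|+|J|\le N$ gives the first line of \eqref{eq:BA-E-L2}, and the specialization to $|I|+|J|\le N-3$ gives the second line.

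For the pointwise bounds \eqref{eq:BA-E-L00}, I would apply Lemma \ref{lem:sobolev} with $u=\del^I L^J E$, obtaining
\[
t\,|\del^I L^J E(t,x)|\ \lesssim\ \sum_{|K|\le 2}\big\|L^K \del^I L^J E\big\|_{L^2_f(\Hcal_s)}.
\]
The commutator estimate $|\del_\alpha L_a u|\lesssim |L_a\del_\alpha u|+\sum_\beta|\del_\beta u|$ from Lemma \ref{lem:comm2}, iterated, lets me rewrite each $L^K\del^I L^J E$ as a linear combination of $\del^{I'}L^{J'}E$ with $|I'|+|J'|\le |I|+|J|+2$. Then the $L^2$ estimate just proved closes the loop: for $|I|+|J|\le N-2$ I land at order $\le N$ and inherit the $s^\delta$-growing bound, giving $|\del^I L^J E|\lesssim C_1\eps\, t^{-1}s^\delta$; for $|I|+|J|\le N-5$ I land at order $\le N-3$ and inherit the uniform bound, giving $|\del^I L^J E|\lesssim C_1\eps\, t^{-1}$.

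There is no serious obstacle here; the only bookkeeping point to be careful about is the three-derivative loss (two from Sobolev, and the implicit one allowing us to hit the mixed term $\|\del\del^I L^J E\|$ which is why the improved estimate $|\del^I L^J E|\lesssim C_1\eps t^{-1}$ is stated at order $N-5$ rather than $N-3$). Everything else is a direct consequence of \eqref{eq:2energy}, \eqref{eq:Sobolev1} and \eqref{eq:commu2}.
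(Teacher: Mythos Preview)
Your proposal is correct and follows exactly the approach the paper sketches: the $L^2$ bounds are read off directly from the definition of $E_1$ in \eqref{eq:2energy}, and the pointwise bounds come from the Sobolev inequality \eqref{eq:Sobolev1} together with the commutator estimates in Lemma~\ref{lem:comm2}. The only minor quibble is your closing remark about an ``implicit one'' derivative loss, which is unnecessary---the gap between $N-3$ and $N-5$ is accounted for entirely by the two derivatives lost to Sobolev, since your argument only needs the $\|\del^{I'}L^{J'}E\|_{L^2_f}$ bound and not the mixed $\|\del\del^{I'}L^{J'}E\|_{L^2_f}$ term.
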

\begin{proof}
We note that the $L^2$--type estimates in \eqref{eq:BA-E-L2} follow directly from the definition of the energy $E_1 (s, E)$, while the pointwise estimates in \eqref{eq:BA-E-L00} can be obtained by using the Sobolev--type inequality \eqref{eq:Sobolev1}.
\end{proof}

\subsection{Refined estimates} \label{subsec:refine}

We first derive the energy estimates for the $n^\Delta$ variable.

\begin{lemma}
Let the bounds in \eqref{eq:BA-KGZ} hold, then we have
\bel{eq:n12-1} 
\aligned
&E (s, \del^I L^J n^\Delta)^{1/2}
+
E (s, \del \del^I L^J n^\Delta)^{1/2}
+
E (s, \del \del \del^I L^J n^\Delta)^{1/2}
\\
\lesssim
&\eps + (C_1 \eps)^2 s^\delta,
\qquad \qquad
|I| + |J| \leq N,
\endaligned
\ee
as well as
\bel{eq:n12-2}
\aligned
&\Big\|(t-|x|)^{-2\delta} ( s/t ) \del \del^I L^J n^\Delta  \Big\|_{L^2_f(\Hcal_s)}
\lesssim \eps + (C_1 \eps)^{3/2},
\qquad
&|I| + |J| \leq N,
\\
&\big| \del \del^I L^J n^\Delta \big|
\lesssim \big( \eps + (C_1 \eps)^{3/2} \big) s^{-1} (t-|x|)^{2\delta},
\qquad
&|I| + |J| \leq N-2.
\endaligned
\ee
\end{lemma}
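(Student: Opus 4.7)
The plan is to prove \eqref{eq:n12-1}, then the weighted $L^2$ bound in \eqref{eq:n12-2}, then the pointwise bound, using the wave equation $-\Box n^\Delta = (E^1)^2+(E^2)^2$ together with the bootstrap bounds for $E$ and the machinery of Section~\ref{sec:EE}.

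For \eqref{eq:n12-1}, I would commute $\partial^K \partial^I L^J$ (with $|K| \le 2$) through the $n^\Delta$-equation, absorb the commutators via Lemma~\ref{lem:comm2}, and invoke the standard energy estimate \eqref{eq:w-EE} (switching to \eqref{eq:w-EE2} when $|K|=2$ so that the $(s'/t)$ factor compensates the $(s/t)$-weighted third-derivative bound from \eqref{eq:BA-E-L2}). The data terms contribute $\lesssim \eps$. The source $\partial^K \partial^I L^J ((E^1)^2+(E^2)^2)$ is handled by Leibniz plus the standard $L^\infty \cdot L^2$ split: since $N \geq 11$, pigeonhole places one factor of $E$ at order $\le N-5$, so it obeys the uniform pointwise bound $\lesssim C_1\eps\, t^{-1}$ from \eqref{eq:BA-E-L00}, while the complementary factor obeys the $L^2_f$-bound $\lesssim C_1\eps\, s^\delta$ from \eqref{eq:BA-E-L2}. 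This yields a source $\lesssim (C_1\eps)^2 s'^{\delta - 1}$ (using $t^{-1} \le s'^{-1}$ on $\Hcal_{s'}$), whose $s'$-integral is $(C_1\eps)^2 s^\delta$, as claimed.

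For the weighted $L^2$ bound in \eqref{eq:n12-2}, I would apply the ghost-weight estimate \eqref{eq:ghost-D} with $\gamma = 2\delta$ to $u = \partial^I L^J n^\Delta$. Writing $A(s)$ for the left-hand side and $f$ for the commuted source, a Cauchy--Schwarz in space bounds the spacetime integral by
\[
\int_{s_0}^s \bigl\| (t-r)^{-\delta} f \bigr\|_{L^2_f(\Hcal_{s'})}\, A(s')^{1/2}\, ds'.
\]
The weighted norm $\|(t-r)^{-\delta}f\|_{L^2_f}$ is again estimated via Leibniz and $L^\infty \cdot L^2$, but now the gain from the weight is captured by the elementary optimization
\[
\bigl\| (t-r)^{-\delta}\, t^{-1} \bigr\|_{L^\infty(\Hcal_{s'})} \lesssim s'^{-1-\delta}.
\]
Paired with the $s^\delta$-free lower-order $L^2$ bound $\lesssim C_1\eps$ from \eqref{eq:BA-E-L2} (available since $N \geq 11$ leaves enough room to arrange both factors in the safe order range), this gives an $s'$-integrable source bound $\lesssim (C_1\eps)^2 s'^{-1-\delta}$. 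A Gronwall-type absorption then closes $A(s)^{1/2} \lesssim \eps + (C_1\eps)^{3/2}$; the $3/2$ power arises from the interplay between the integrable source and the $A(s')^{1/2}$ left over from the Cauchy--Schwarz step.

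For the pointwise bound I would apply the weighted Sobolev inequality \eqref{eq:Sobolev3} with $\gamma = 2\delta$ to $\partial \partial^I L^J n^\Delta$:
\[
\sup_{\Hcal_s} \bigl| s(t-r)^{-2\delta}\, \partial \partial^I L^J n^\Delta \bigr| \lesssim \sum_{|K| \le 2} \bigl\| (s/t)(t-r)^{-2\delta} L^K \partial \partial^I L^J n^\Delta \bigr\|_{L^2_f}.
\]
Commuting $L^K$ past $\partial$ with Lemma~\ref{lem:comm2} and inserting the weighted $L^2$ bound just proved (for indices of total order $\le N$), the right-hand side is $\lesssim \eps + (C_1\eps)^{3/2}$; division by $s$ gives the asserted decay for $|I|+|J| \le N-2$. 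The main obstacle I anticipate is the Leibniz bookkeeping in the second step: to retain the $s$-independent right-hand side $\eps + (C_1\eps)^{3/2}$ one must check that in every term of the expansion of $(t-r)^{-\delta}\partial^I L^J(EE)$ the derivative counts can be distributed so that the lower-order factor receives the $(t-r)^{-\delta}$-weighted $L^\infty$ bound, while the higher-order factor falls into the uniform $s^\delta$-free $L^2$ bound; the borderline cases where one $E$-factor carries close to $N$ derivatives are precisely those where pigeonhole is tightest and must be handled with care.
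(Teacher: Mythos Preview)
Your overall architecture is right, but the ghost-weight step has a genuine gap. You apply \eqref{eq:ghost-D} with $\gamma=2\delta$; this is the wrong exponent on two counts. First, the left-hand side of \eqref{eq:ghost-D} with $\gamma=2\delta$ is $\int_{\Hcal_s}(t-r)^{-2\delta}(s/t)^2|\del u|^2\,dx=\|(t-r)^{-\delta}(s/t)\del u\|_{L^2_f}^2$, so you would only prove a bound on $\|(t-r)^{-\delta}(s/t)\del u\|_{L^2_f}$, not the claimed $\|(t-r)^{-2\delta}(s/t)\del u\|_{L^2_f}$. Second, and more seriously, with only $(t-r)^{-\delta}$ left on the source after your Cauchy--Schwarz split, the top-order Leibniz terms do \emph{not} close: when $|I|+|J|=N$ the term $\del^I L^J E\cdot E$ forces the factor carrying $N$ derivatives into $L^2$, and there only the bound $\|\del^I L^J E\|_{L^2_f}\lesssim C_1\eps\,s^\delta$ is available (the uniform bound requires order $\le N-3$). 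Your claim that ``both factors can be arranged in the safe order range'' is therefore false at top order. The resulting source is $(C_1\eps)^2 s'^{-1-\delta}\cdot s'^{\delta}=(C_1\eps)^2 s'^{-1}$, which is not integrable and the Gronwall does not give a uniform bound.

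The fix is to take $\gamma=4\delta$. Then the weighted pointwise gain is $(t-r)^{-2\delta}t^{-1}\lesssim s'^{-1-2\delta}$ (or, in the paper's version with the full weight on $f$, $(t-r)^{-4\delta}t^{-1}\lesssim s'^{-1-4\delta}$), and this extra $s'^{-\delta}$ absorbs the unavoidable $s'^{\delta}$ from the high-order $L^2$ factor, leaving an integrable $s'^{-1-\delta}$. The paper in fact avoids your Gronwall step entirely: it puts the full weight $(t-r)^{-4\delta}$ on the source and bounds $\|(s'/t)\del_t\del^I L^J n^\Delta\|_{L^2_f}$ by the \emph{unweighted} energy $E(s',\del^I L^J n^\Delta)^{1/2}\lesssim\eps+(C_1\eps)^2 s'^{\delta}$ already obtained in \eqref{eq:n12-1}. (A minor remark on your first step: the switch to \eqref{eq:w-EE2} for $|K|=2$ is unnecessary; the paper uses \eqref{eq:w-EE} throughout and handles the third derivative by inserting $(s/t)(t/s)$ and using $\|(t/s)\del^{I_2}L^{J_2}E\|_{L^\infty}\lesssim C_1\eps\,s^{-1}$.)
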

\begin{proof}

We act $\del \del \del^I L^J$ with $|I| + |J| \leq N$ on the $n^\Delta$ equation to get
$$
-\Box \del \del \del^I L^J n^\Delta
=
\del \del \del^I L^J \big( (E^1)^2 + (E^2)^2 \big).
$$

Consider first the estimates in \eqref{eq:n12-1}, and we note that (recall $N\geq 11$)
$$
\aligned
&\big\| \del \del \del^I L^J \big( (E^1)^2 + (E^2)^2 \big) \big\|_{L^2_f(\Hcal_s)}
\\
\lesssim
&\sum_{\substack{|I_1|+|J_1| \leq N\\ |I_2| + |J_2| \leq N-5} }
\big\| (s/t) \del \del \del^{I_1} L^{J_1} E \big\|_{L^2_f(\Hcal_s)} \big\| (t/s) \del^{I_2} L^{J_2} E \big\|_{L^\infty(\Hcal_s)} 
\\
+ &\sum_{\substack{|I_1|+|J_1| \leq N\\ |I_2| + |J_2| \leq N-5} } \big\| \del^{I_1} L^{J_1} E \big\|_{L^2_f(\Hcal_s)} \big\| \del^{I_2} L^{J_2} E \big\|_{L^\infty(\Hcal_s)}  
\\
\lesssim
&(C_1 \eps)^2 s^{-1+\delta}.
\endaligned
$$
Then the energy estimates \eqref{eq:w-EE} implies that
$$
\aligned
E (s, \del \del \del^I L^J n^\Delta)^{1/2}
\lesssim
& E (s_0, \del \del \del^I L^J n^\Delta)^{1/2}
+
\int_{s_0}^s \big\| \del \del \del^I L^J \big( (E^1)^2 + (E^2)^2 \big) \big\|_{L^2_f(\Hcal_{s'})} \, ds'
\\
\lesssim
& \eps + (C_1 \eps)^2 \int_{s_0}^s s'^{-1+\delta} \, ds'
\lesssim
\eps + (C_1 \eps)^2 s^\delta.
\endaligned
$$
In the same way,  we obtain
$$
E (s, \del^I L^J n^\Delta)^{1/2} + E (s, \del \del^I L^J n^\Delta)^{1/2}
\lesssim
\eps + (C_1 \eps)^2 s^\delta.
$$

Next, we turn to prove the estimates in \eqref{eq:n12-2}. We apply the ghost weight energy estimates \eqref{eq:ghost-D} on the equation
$$
-\Box \del^I L^J n^\Delta
=
\del^I L^J \big( (E^1)^2 + (E^2)^2 \big),
$$
to arrive at
$$
\aligned
&\int_{\Hcal_s} (t-r)^{-4\delta} \big(s/t \big)^2 \big| \del \del^I L^J n^\Delta \big|^2 \, dx
\\
\lesssim
& E(s_0, n^\Delta)
+
\int_{s_0}^s \int_{\Hcal_{s'}} (s'/t) (t-r)^{-4\delta} \del_t \del^I L^J n^\Delta \del^I L^J \big( (E^1)^2 + (E^2)^2 \big) \, dxds'
\\
\lesssim
& \eps^2 + \int_{s_0}^s \big\| (s'/t) \del_t \del^I L^J n^\Delta \big\|_{L^2_f(\Hcal_{s'})} \big\| (t-r)^{-4\delta} \del^I L^J \big( (E^1)^2 + (E^2)^2 \big) \big\|_{L^2_f(\Hcal_{s'})} \, ds'.
\endaligned
$$
Observe that
$$
\aligned
&\big\| (t-r)^{-4\delta} \del^I L^J \big( (E^1)^2 + (E^2)^2 \big) \big\|_{L^2_f(\Hcal_s)}
\\\lesssim
&\sum_{\substack{|I_1|+|J_1| \leq N\\ |I_2| + |J_2| \leq N-2} }
\big\| \del^{I_1} L^{J_1} E \big\|_{L^2_f(\Hcal_s)} \big\| (t-r)^{-4\delta} \del^{I_2} L^{J_2} E \big\|_{L^\infty(\Hcal_s)} 
\\
\lesssim
& (C_1 \eps)^3 s^{-1-2\delta},
\endaligned
$$
and, we thus obtain
$$
\aligned
\int_{\Hcal_s} (t-r)^{-4\delta} \big(s/t \big)^2 \big| \del \del^I L^J n^\Delta \big|^2 \, dx
\lesssim \eps^2 + (C_1 \eps)^3 \int_{s_0}^s s'^{-1-\delta} \, ds'
\lesssim \eps^2 + (C_1 \eps)^3.
\endaligned
$$
Finally, we apply the Sobolev--type inequality \eqref{eq:Sobolev3} to deduce the pointwise estimates appearing in \eqref{eq:n12-2}.
\end{proof}

We need the following result on the estimates of wave components with second order partial derivatives, which was used in \cite{MaH1}.

\begin{lemma}
Let $u = u(t, x)$ be a sufficiently nice function with support $\Kcal$, then it hold
\bel{eq:ddu}
\big| \del \del u \big|
\lesssim
(t-|x|)^{-1} \big( \big| \del L u \big| + \big| \del u\big|  \big) + {t \over t-|x|} \big|\Box u \big|.
\ee
As a consequence, we have
\bel{eq:n12-5}
\aligned
&\big| \del \del \del^I L^J n^\Delta \big|
\lesssim \big(\eps + (C_1 \eps)^{3/2} \big) s^{-1} (t-|x|)^{-1/2},
\qquad
&|I| + |J| \leq N-3,
\\
&\big\| (s/t) (t-|x|)^{1/2} \del \del \del^I L^J n^\Delta \big\|_{L^2_f(\Hcal_s)}
\lesssim \eps + (C_1 \eps)^{3/2},
\qquad
&|I| + |J| \leq N-1,
\endaligned
\ee
\end{lemma}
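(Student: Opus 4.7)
The plan is to first establish the pointwise identity \eqref{eq:ddu} (which is purely algebraic and independent of any equation), and then apply it to $u = \partial^I L^J n^\Delta$, combining with the ghost-weight bound \eqref{eq:n12-2}, the energy bound \eqref{eq:n12-1}, and the bootstrap $L^\infty$ bounds on $E$, to deduce both estimates in \eqref{eq:n12-5}.

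For \eqref{eq:ddu}, I would exploit $t \partial_a = L_a - x_a \partial_t$, which yields the identity $t \partial_a \partial_\beta u = \partial_\beta L_a u - x_a \partial_t \partial_\beta u + O(|\partial u|)$, where the $O(|\partial u|)$ term absorbs the zeroth-order contribution of the commutator $[\partial_\beta, L_a]$. Applying this with $\beta = 0$ gives $|\partial_t \partial_a u| \lesssim t^{-1}(|\partial L u| + |\partial u|) + (|x_a|/t)|\partial_t^2 u|$, and with $\beta = a$ gives $|\partial_a^2 u| \lesssim t^{-1}(|\partial L u| + |\partial u|) + (x_a^2/t^2)|\partial_t^2 u|$. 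Summing over $a$ and using $\partial_t^2 u = \sum_a \partial_a^2 u - \Box u$, I obtain $(1 - |x|^2/t^2)|\partial_t^2 u| \lesssim t^{-1}(|\partial L u| + |\partial u|) + |\Box u|$, and dividing by $1 - |x|^2/t^2 = (t-|x|)(t+|x|)/t^2$ (with $t + |x| \sim t$) yields the bound on $|\partial_t^2 u|$; the mixed and spatial second derivatives follow immediately.

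To prove the pointwise part of \eqref{eq:n12-5}, I apply \eqref{eq:ddu} to $u = \partial^I L^J n^\Delta$ for $|I| + |J| \leq N-3$, using $[\partial_\alpha, \Box] = [L_a, \Box] = 0$ so that $\Box u = -\partial^I L^J\big((E^1)^2 + (E^2)^2\big)$. The two derivative terms on the right of \eqref{eq:ddu} have total order $\leq N-2$, so the pointwise part of \eqref{eq:n12-2} gives $(t-|x|)^{-1}\cdot s^{-1}(t-|x|)^{2\delta} = s^{-1}(t-|x|)^{-1+2\delta}$, dominated by $s^{-1}(t-|x|)^{-1/2}$ since $t-|x| \geq 1$ in $\Kcal$ and $-1+2\delta < -1/2$ for $\delta < 1/24$. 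For the $\Box$ term, Leibniz and \eqref{eq:BA-E-L00} yield $|\partial^I L^J((E^1)^2 + (E^2)^2)| \lesssim (C_1\eps)^2 t^{-2}s^\delta$; using $t(t-|x|) \gtrsim s^2$ (from $(t-|x|)(t+|x|) = s^2$ and $t+|x| \leq 2t$), the quantity $\frac{t}{t-|x|}|\Box u|$ becomes $(C_1\eps)^2 s^{\delta-2}$, which is $\lesssim (C_1\eps)^2 s^{-1}(t-|x|)^{-1/2}$ on $\Hcal_s$ because $(t-|x|)^{-1/2} \geq s^{-1/2}$.

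The $L^2$ bound is the subtler part and the main obstacle. After multiplying \eqref{eq:ddu} by $(s/t)(t-|x|)^{1/2}$, the $\partial L u$ and $\partial u$ contributions become $(s/t)(t-|x|)^{-1/2}$ times the respective derivatives, and since $(t-|x|)^{-1/2} \leq (t-|x|)^{-2\delta}$ holds when $t-|x| \geq 1$ and $\delta < 1/4$, they are controlled by the $L^2$ part of \eqref{eq:n12-2} at order $\leq N$. The $\Box$ term becomes $s(t-|x|)^{-1/2}|\Box u|$, where the naive estimate $(t-|x|)^{-1/2} \leq 1$ only gives $\|s|\Box u|\|_{L^2_f(\Hcal_s)} \lesssim (C_1\eps)^2 s^\delta$, which grows in $s$ and fails to close. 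The key trick will be to exploit the sharper relation $(t-|x|)^{-1/2} \lesssim \sqrt{t}/s$ (coming directly from $(t-|x|)(t+|x|) = s^2$), reducing matters to estimating $\|\sqrt{t}\,\Box u\|_{L^2_f(\Hcal_s)}$. Applying Leibniz with the low-order factor of $E$ in $L^\infty$ (so that $\sqrt{t}\cdot C_1\eps/t = C_1\eps/\sqrt{t} \leq C_1\eps/\sqrt{s}$) and the high-order factor in $L^2$ (bounded by $C_1\eps s^\delta$) then yields $(C_1\eps)^2 s^{\delta - 1/2}$, which is uniformly bounded for $\delta < 1/2$ and closes the estimate.
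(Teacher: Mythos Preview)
Your proof is correct and follows the same route as the paper. The paper defers the proof of \eqref{eq:ddu} to references and only writes out the pointwise part of \eqref{eq:n12-5}, saying the $L^2$ bound ``can be derived in a very similar way''; your argument fills in both gaps with the natural ingredients (the identity $t\del_a=L_a-x_a\del_t$ for \eqref{eq:ddu}, and for the $L^2$ bound the key relation $(t-|x|)^{-1/2}\lesssim \sqrt{t}/s$ together with the Leibniz splitting placing the low-order factor of $E$ in $L^\infty$), exactly as the paper's machinery anticipates.
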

\begin{proof}
The proof of \eqref{eq:ddu} can be found in \cite{MaH1, Ma17, Dong2005}.
And, we will only show 
$$
\big| \del \del \del^I L^J n^\Delta \big|
\lesssim \big(\eps + (C_1 \eps)^{3/2} \big) s^{-1} (t-|x|)^{-1/2},
\qquad
|I| + |J| \leq N-3,
$$
as the other estimate in \eqref{eq:n12-5} can be derived in a very similar way.

According to \eqref{eq:ddu}, we have
$$
\aligned
\big| \del \del \del^I L^J n^\Delta \big|
\lesssim
(t-|x|)^{-1} \big( \big| \del L \del^I L^J n^\Delta \big| + \big| \del \del^I L^J n^\Delta\big|  \big) + {t \over t-|x|} \big|\Box \del^I L^J n^\Delta \big|.
\endaligned
$$
On one hand, the estimates for commutators give
$$
\big| \del L \del^I L^J n^\Delta \big|
\lesssim
\sum_{|I_1| \leq |I|} \big( \big| \del \del^{I_1} L L^J n^\Delta \big| + \big| \del \del^{I_1} L^J n^\Delta \big| \big),
$$
on the other hand, the equation of $n^\Delta$ in \eqref{eq:model-KGZ2} implies
$$
\big|\Box \del^I L^J n^\Delta \big|
\leq \big| \del^I L^J |E|^2 \big|
\lesssim
\sum_{\substack{|I_1| + |I_2| \leq |I|\\ |J_1| + |J_2| \leq |J|}}  \big| \del^{I_1} L^{J_1} E \big|   \big| \del^{I_2} L^{J_2} E \big|.
$$
Successively, we have
$$
\big| \del \del \del^I L^J n^\Delta \big|
\lesssim
\big( \eps + (C_1 \eps)^{3/2} \big)  (t-|x|)^{-1} s^{-1} (t-r)^{2\delta} + (C_1 \eps)^2 {t \over t-|x|} t^{-2+2\delta}.
$$
Finally the smallness of $\delta$ yields the desired estimates, and the proof is done.
\end{proof}

We are now ready to provide the refined estimates of $E$ component.

\begin{proposition}\label{prop:refine}
Under the bootstrap assumptions in \eqref{eq:BA-KGZ}, the following estimates hold
\bel{eq:refine-2}
\aligned
E_1 (s, \del^I L^J E)^{1/2}
+
E_1 (s, \del \del^I L^J E)^{1/2}
&\lesssim \eps + (C_1 \eps)^2 s^\delta,
\qquad
&|I| + |J| \leq N,
\\
E_1 (s, \del^I L^J E)^{1/2}
&\lesssim \eps + (C_1 \eps)^{3/2},
\qquad
&|I| + |J| \leq N-3.
\endaligned
\ee
\end{proposition}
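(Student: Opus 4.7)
My approach is to apply $\del^I L^J$ and $\del\del^I L^J$ to the Klein--Gordon equation for $E^a$ and invoke \eqref{eq:w-EE} for the first (growing) bound and the refined estimate \eqref{eq:w-EE2} for the second (uniform) bound. Because $\Box$ and the mass term commute with $\del_\alpha$ and with the boosts $L_a$, acting with $\del^I L^J$ on both sides of $-\Box E^a + E^a = \Delta n^\Delta E^a$ yields
\[
 -\Box\,\del^I L^J E^a + \del^I L^J E^a = \del^I L^J\bigl(\Delta n^\Delta\, E^a\bigr),
\]
and similarly for $\del\del^I L^J$. The commutator $[L_a,\del_b]=-\delta_{ab}\del_t$ (so $[L,\Delta]$ is strictly lower order in $L$) generates only harmless lower-order contributions, and a Leibniz expansion reduces the analysis, modulo such terms, to sums of bilinear products $\del\del\,\del^{I_1}L^{J_1}n^\Delta\cdot\del^{I_2}L^{J_2}E^a$ with $|I_1|+|I_2|+|J_1|+|J_2|$ controlled by the total derivative count.

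For the first (high-order) bound, \eqref{eq:w-EE} gives $E_1(s,\del^I L^J E)^{1/2}\leq C\eps + C\int_{s_0}^s\|\mathrm{source}\|_{L^2_f(\Hcal_{s'})}\,ds'$. I split the bilinear sum into two regimes. When $|I_2|+|J_2|\leq N-5$, the pointwise bound \eqref{eq:BA-E-L00} gives $|\del^{I_2}L^{J_2}E|\lesssim C_1\eps\,t^{-1}\leq C_1\eps\,s^{-1}$ on $\Hcal_s$, which pairs with the $L^2$ control $\|(s/t)\del\del\del^{I_1}L^{J_1}n^\Delta\|_{L^2_f}\leq E(s,\del\del^{I_1}L^{J_1}n^\Delta)^{1/2}\lesssim \eps+(C_1\eps)^2 s^\delta$ from \eqref{eq:n12-1}. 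When $|I_2|+|J_2|\geq N-4$, only a bounded number of derivatives falls on $n^\Delta$, so the pointwise bound \eqref{eq:n12-5}, $|\del\del\del^{I_1}L^{J_1}n^\Delta|\lesssim s^{-1}(t-r)^{-1/2}$, is available; I then absorb the singular weight via $\|t^{-1}(t-r)^{-1/2}\|_{L^\infty(\Hcal_s)}\lesssim s^{-3/2}$ (a consequence of $(t-r)^{-1}\leq 2t/s^2$ on $\Hcal_s$) paired with $\|\del^{I_2}L^{J_2}E\|_{L^2_f}\lesssim C_1\eps\,s^\delta$. Both regimes give integrands of order at most $(C_1\eps)^2 s^{-1+2\delta}$, whose integrals produce the first claimed bound (provided $\delta$ is small). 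The $\del\del^I L^J E$ version is handled identically, the extra $\del$ being distributed either to $n^\Delta$ (still within the scope of \eqref{eq:n12-1} since the relevant index on $n^\Delta$ stays $\leq N$) or to $E$ (covered by the bootstrap).

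For the refined uniform bound at order $|I|+|J|\leq N-3$, I switch to \eqref{eq:w-EE2}. After Cauchy--Schwarz and the standard differential reduction using $\|\del_t\phi\|_{L^2_f}\leq E_1(s,\phi)^{1/2}$, this estimate becomes
\[
 E_1(s,\del^I L^J E)^{1/2}\leq C\eps + C\int_{s_0}^s \bigl\|(s'/t)\,\del^I L^J\bigl(\Delta n^\Delta\,E^a\bigr)\bigr\|_{L^2_f(\Hcal_{s'})}\,ds'.
\]
The decisive gain is the extra $s'/t$ factor inside the source. Since $|I|+|J|\leq N-3$, the pointwise bound $|\del^{I_2}L^{J_2}E|\lesssim C_1\eps\,t^{-1}$ from \eqref{eq:BA-E-L00} is available for \emph{every} split, so combined with $|\del\del\del^{I_1}L^{J_1}n^\Delta|\lesssim(\eps+(C_1\eps)^{3/2})s^{-1}(t-r)^{-1/2}$ from \eqref{eq:n12-5}, the squared $L^2_f$ norm of the source reduces to $\lesssim (C_1\eps)^3\int_{\Hcal_{s'}}t^{-4}(t-r)^{-1}\,dx$. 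Using $(t-r)^{-1}\leq 2t/s'^2$ on $\Hcal_{s'}$ and the compact support constraint $r\leq (s'^2-1)/2$, the spatial integral collapses to an explicit radial integral of size $\mathcal{O}(s'^{-3})$, so the $L^2_f$ source is of order $s'^{-3/2}$, integrable in $s'$, and yielding the uniform bound $\eps+(C_1\eps)^{3/2}$.

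The principal obstacle is the singular weight $(t-r)^{-1/2}$ inherited from \eqref{eq:n12-5}: a naive estimation would produce a divergent spatial integral near $r\to t-1$. The key ingredient is the hyperboloidal identity $(t-r)(t+r)=s^2$, which converts $(t-r)^{-1}$ into $\lesssim t/s^2$; combined with the $t^{-1}$ Klein--Gordon decay of $E$ and the extra $s'/t$ supplied by \eqref{eq:w-EE2}, the spatial integration becomes convergent. This is precisely why \eqref{eq:w-EE}, lacking the $s'/t$ gain, cannot deliver the uniform low-order bound, as already noted by the author following \eqref{eq:w-EE2}.
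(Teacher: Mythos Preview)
Your high-order argument (the first line of \eqref{eq:refine-2}) is essentially the paper's proof, and the minor sloppiness there (the stray $t^{-1}$ in ``$\|t^{-1}(t-r)^{-1/2}\|_{L^\infty}$'') is harmless since $(t-r)^{-1/2}\leq 1$ in $\Kcal$ already gives an acceptable $s^{-1}$ pointwise bound on $\del\del n^\Delta$.

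The genuine gap is in your low-order argument. The claim ``$\|\del_t\phi\|_{L^2_f(\Hcal_s)}\leq E_1(s,\phi)^{1/2}$'' is \emph{false} in the hyperboloidal setting: by \eqref{eq:2energy} the energy controls only $\|(s/t)\del_t\phi\|_{L^2_f}$, not $\|\del_t\phi\|_{L^2_f}$, and since $s/t\leq 1$ the inequality goes the wrong way. Consequently your ``standard differential reduction'' does not yield $E_1(s,\del^I L^J E)^{1/2}\leq C\eps + C\int_{s_0}^s\|(s'/t)\,\text{source}\|_{L^2_f}\,ds'$; if you Cauchy--Schwarz with the $s'/t$ on the $\del_t\phi$ side you recover precisely \eqref{eq:w-EE} and lose the gain. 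This is not a technicality: extracting the extra $s'/t$ in \eqref{eq:w-EE2} while retaining control of $\del_t\phi$ is the whole point of the estimate.

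The paper repairs this by \emph{not} attempting a Gronwall closure. Instead it bounds $\|\del_t\del^I L^J E\|_{L^2_f(\Hcal_{s'})}\lesssim C_1\eps\,s'^{\delta}$ directly from the bootstrap on the \emph{shifted} energy $E_1(s,\del\del^I L^J E)$ (via the Klein--Gordon mass term, cf.\ \eqref{eq:BA-E-L2}); then the $(s'/t)$ weight is paired with a $(t-r)^{1/2}$ and placed on the $n^\Delta$ factor in $L^2$, using the second line of \eqref{eq:n12-5}, while $E$ carries $(t-r)^{-1/2}$ in $L^\infty$. Once you replace your false inequality by this use of the bootstrap on $\del E$, your alternative route---estimating $\|(s'/t)\,\text{source}\|_{L^2_f}$ by pointwise bounds on both factors and computing the radial integral explicitly---does go through, but it then yields $E_1^{1/2}\lesssim\eps+(C_1\eps)^{5/4}$ rather than the stated $(C_1\eps)^{3/2}$. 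That still closes the bootstrap, but the proposition as written is not what you obtain. (Also note that the uniform bound $|\del^{I_2}L^{J_2}E|\lesssim C_1\eps\,t^{-1}$ from \eqref{eq:BA-E-L00} holds only for $|I_2|+|J_2|\leq N-5$, not for every split up to $N-3$; for the remaining two orders you pick up an $s^\delta$, which is harmless but should be acknowledged.)
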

\begin{proof}
Act the vector filed $\del^I L^J$ on the $E^a$ equation, and we get
$$
-\Box \del^I L^J E^a + \del^I L^J E^a = \del^I L^J \big(\Delta n^\Delta E^a \big).
$$

We first provide the proof for the high order energy cases, with $|I| + |J| \leq N$.
The energy estimates in \eqref{eq:w-EE} imply that
$$
E_1 (s, \del^I L^J E)^{1/2}
\leq
E_1 (s_0, \del^I L^J E)^{1/2}
+
\int_{s_0}^s \big\| \del^I L^J \big(\Delta n^\Delta E \big) \big\|_{L^2_f(\Hcal_{s'})} \, ds'.
$$
Easily we find that
$$
\aligned
\big\| \del^I L^J \big(\Delta n^\Delta E^a \big) \big\|_{L^2_f(\Hcal_s)}
\lesssim
&\sum_{\substack{|I_1| + |J_1| \leq N\\ |I_2| + |J_2| \leq N-5} }
\Big( \big\| (s/t) \del \del \del^{I_1} L^{J_1} n^\Delta \big\|_{L^2_f(\Hcal_s)} \big\| (t/s) \del^{I_2} L^{J_2} E \big\|_{L^\infty(\Hcal_s)} 
\\
& \hskip2cm + \big\| \del \del \del^{I_2} L^{J_2} n^\Delta \big\|_{L^\infty(\Hcal_s)} \big\| \del^{I_1} L^{J_1} E \big\|_{L^2_f(\Hcal_s)}   \Big)
\\
\lesssim 
& (C_1 \eps)^2 s^{-1+\delta},
\endaligned
$$
which further deduces that
$$
E_1 (s, \del^I L^J E)^{1/2}
\lesssim \eps + (C_1 \eps)^2 s^\delta,
\qquad
|I| + |J| \leq N.
$$
In the same way, we also obtain
$$
E_1 (s, \del \del^I L^J E)^{1/2}
\lesssim \eps + (C_1 \eps)^2 s^\delta,
\qquad
|I| + |J| \leq N.
$$

Note , however, that the energy estimates in \eqref{eq:w-EE} cannot be used to show the uniform energy estimates of $E$,
so we rely on the trick here that we turn to the energy estimates in \eqref{eq:w-EE2} which read as follows
$$
E_1 (s, \del^I L^J E)
\leq
E_1 (s_0, \del^I L^J E)
+
\int_{s_0}^s \int_{\Hcal_{s'}} (s'/t) \big| \del_t \del^I L^J E \big|  \big| \del^I L^J \big(\Delta n^\Delta E \big) \big| \, dxds'.
$$
The important thing is that we can move the good factor $s'/t$ to the function of $n^\Delta$, which helps circumvent the lack of $t$ decay of $n^\Delta$ part.
For $|I| + |J| \leq N-3$, we have
$$
\aligned
&\int_{\Hcal_s} (s/t) \big| \del_t \del^I L^J E \big|  \big| \del^I L^J \big(\Delta n^\Delta E \big) \big| \, dx
\\
\lesssim
&\sum_{|I_1| + |I_2| + |J_1| + |J_2| \leq N-3} \big\| (s/t) (t-|x|)^{1/2} \del \del \del^{I_1} L^{J_1} n^\Delta \big\|_{L^2_f(\Hcal_s)}
 \big\| \del_t  \del^I L^J  E\big\|_{L^2_f(\Hcal_s)}
\\
&\hskip3.2cm \cdot 
\big\| (t-|x|)^{-1/2} \del^{I_2} L^{J_2} E \big\|_{L^\infty(\Hcal_s)}
\\
\lesssim
& (C_1 \eps)^3 s^{-3/2 + 3 \delta},
\endaligned
$$
which is integrable as $\delta$ is small.
Hence, we further have
$$
E_1 (s, \del^I L^J E)^{1/2}
\lesssim
\eps + (C_1 \eps)^{3/2}
$$
as desired.

We thus complete the proof.
\end{proof}

The proof of Theorem \ref{thm:main1} follows.
\begin{proof}[Proof of Theorem \ref{thm:main1}]
By choosing $C_1$ large enough, and $\eps$ sufficiently small, such that $C_1 \eps \ll \delta$, the estimates in \eqref{eq:refine-2} imply that
$$
\aligned
E_1 (s, \del^I L^J E)^{1/2}
+
E_1 (s, \del \del^I L^J E)^{1/2}
&\leq {1\over 2} C_1 \eps s^\delta,
\qquad
&|I| + |J| \leq N,
\\
E_1 (s, \del^I L^J E)^{1/2}
&\leq {1\over 2} C_1 \eps,
\qquad
&|I| + |J| \leq N-3
\endaligned
$$
are valid for all $s\in [s_0, s_1)$. This means $s_1$ must be $+\infty$, and thus the Klein-Gordon-Zakharov equations \eqref{eq:model-KGZ} admit the global solution $(E, n)$. 

The sharp pointwise decay of $E$ is from \eqref{eq:BA-E-L00}, while the sharp pointwise decay of $n = \Delta n^\Delta$ can be seen from \eqref{eq:n12-5}. On the other hand, the energy estimates \eqref{eq:thm-E} can be obtained from \eqref{eq:BA-KGZ} and \eqref{eq:n12-1}.
\end{proof}


\section*{Appendix: Proof of Theorem \ref{thm:main2}}\label{sec:Appendix}

\subsection*{Energy estimates for quasilinear wave and Klein-Gordon equations}

We first rewrite the equations in \eqref{eq:model-q} in the following form
\bel{eq:model-q-A}
\aligned
&-\Box v + v + Q^{\alpha \beta} \del_\alpha \del_\beta w = 0,
\\
&-\Box w + Q^{\alpha \beta} \del_\alpha \del_\beta v = 0,
\\
&\big( v, \del_t v, w, \del_t w \big)(t_0) = (v_0, v_1, w_0, w_1),
\endaligned
\ee 
in which
\be 
Q^{\alpha \beta}
:= P_1^{\alpha \beta} v + P_2^{\alpha \beta \gamma} \del_\gamma v.
\ee
Without loss of generality, we assume the following symmetry conditions
\be 
P_1^{\alpha \beta} = P_1^{ \beta \alpha},
\qquad
P_1^{\alpha \beta \gamma} = P_1^{ \beta \alpha \gamma},
\ee
which imply that
\be 
Q^{\alpha \beta} = Q^{\beta \alpha}.
\ee

We define the energy for the quasilinear system \eqref{eq:model-q-A} as
\bel{eq:E-q} 
\aligned
E (s, v, w)
=
&E_1 (s, v) + E (s, w) 
+
\int_{\Hcal_s} \Big( Q^{0\beta} \del_\beta v \del_t w + Q^{0\beta} \del_\beta w \del_t v - Q^{\alpha \beta} \del_\alpha v \del_\beta w
\\
& \hskip4cm - (x_a /t) \big( Q^{a\beta} \del_\beta v \del_t w + Q^{a\beta} \del_\beta w \del_t v \big) \Big) \, dx.
\endaligned
\ee
Now the energy estimates for the quasilinear system \eqref{eq:model-q-A} is illustrated.

\begin{proposition}\label{prop:EE-q}
Consider the system
$$
\aligned
&-\Box v + v + Q^{\alpha \beta} \del_\alpha \del_\beta w = f,
\\
&-\Box w + Q^{\alpha \beta} \del_\alpha \del_\beta v = g,
\\
&\big( v, \del_t v, w, \del_t w \big)(t_0) = (v_0, v_1, w_0, w_1),
\endaligned
$$
then we have 
\bel{eq:EE-q-A1}
\aligned
E (s, v, w)
\leq 
&E (s_0, v, w)
+
\int_{s_0}^s \int_{\Hcal_{s'}} (s'/t) \big( \del_\alpha Q^{\alpha \beta} \del_\beta v \del_t w + \del_\alpha Q^{\alpha\beta} \del_\beta w \del_t v 
\\
&\hskip4.3cm - \del_t Q^{\alpha\beta} \del_\beta v \del_\alpha w + f \del_t v + g \del_t w  \big) \, dxds'.
\endaligned
\ee

\end{proposition}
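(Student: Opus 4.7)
The plan is to apply the standard multiplier method: multiply the first equation by $\del_t v$, the second by $\del_t w$, add them, reorganize the sum as a spacetime divergence plus a remainder, and then integrate over $\Kcal_{[s_0,s]}$ and invoke Stokes' theorem exactly as in the proof of Proposition \ref{prop:ghost}. For the semilinear parts I use the pointwise identity
\[
(-\Box v + v)\,\del_t v = \tfrac{1}{2}\del_t\Bigl( (\del_t v)^2 + \sum_a (\del_a v)^2 + v^2 \Bigr) - \del_a\bigl(\del^a v\,\del_t v\bigr)
\]
and the analogous massless identity for $w$; after Stokes' theorem, with outward normal $(t,-x^a)(t^2+|x|^2)^{-1/2}$ on $\Hcal_s$ and area element $(t^2+|x|^2)^{1/2}t^{-1}dx$, these two contributions assemble on the boundary into $E_1(s,v) + E(s,w)$ minus their $s_0$ counterparts, while the forcing terms $f\,\del_t v + g\,\del_t w$ go onto the right-hand side after using $dxdt = (s'/t)dx\,ds'$.

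The heart of the proof is the quasilinear cross terms $Q^{\alpha\beta}\del_\alpha\del_\beta w\,\del_t v + Q^{\alpha\beta}\del_\alpha\del_\beta v\,\del_t w$. I apply Leibniz in two complementary ways. First,
\begin{align*}
\del_\alpha(Q^{\alpha\beta}\del_\beta v\,\del_t w) &= \del_\alpha Q^{\alpha\beta}\,\del_\beta v\,\del_t w + Q^{\alpha\beta}\del_\alpha\del_\beta v\,\del_t w + Q^{\alpha\beta}\del_\beta v\,\del_\alpha\del_t w, \\
\del_\alpha(Q^{\alpha\beta}\del_\beta w\,\del_t v) &= \del_\alpha Q^{\alpha\beta}\,\del_\beta w\,\del_t v + Q^{\alpha\beta}\del_\alpha\del_\beta w\,\del_t v + Q^{\alpha\beta}\del_\beta w\,\del_\alpha\del_t v,
\end{align*}
and second, using the symmetry $Q^{\alpha\beta} = Q^{\beta\alpha}$,
\[
Q^{\alpha\beta}\del_\beta v\,\del_\alpha\del_t w + Q^{\alpha\beta}\del_\beta w\,\del_\alpha\del_t v = \del_t\bigl(Q^{\alpha\beta}\del_\alpha v\,\del_\beta w\bigr) - \del_t Q^{\alpha\beta}\,\del_\alpha v\,\del_\beta w.
\]
Subtracting the second from the sum of the first two expresses the bilinear cross term as a pure spacetime divergence plus the remainder $\del_\alpha Q^{\alpha\beta}\del_\beta v\,\del_t w + \del_\alpha Q^{\alpha\beta}\del_\beta w\,\del_t v - \del_t Q^{\alpha\beta}\del_\alpha v\,\del_\beta w$, which is exactly the $Q$-derivative combination appearing on the right-hand side of \eqref{eq:EE-q-A1}.

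Splitting $\del_\alpha = (\del_t,\del_a)$ in the resulting spacetime divergence and passing to the boundary via Stokes: the time-component part contributes $Q^{0\beta}\del_\beta v\,\del_t w + Q^{0\beta}\del_\beta w\,\del_t v - Q^{\alpha\beta}\del_\alpha v\,\del_\beta w$ on $\Hcal_s$, while the spatial components, dotted with $-x^a/t$ from the outward normal, contribute $-(x_a/t)(Q^{a\beta}\del_\beta v\,\del_t w + Q^{a\beta}\del_\beta w\,\del_t v)$. These are precisely the correction integrands in the definition \eqref{eq:E-q} of $E(s,v,w)$. Converting $dxdt = (s'/t)dx\,ds'$ in the remaining spacetime integral of the remainders then yields exactly \eqref{eq:EE-q-A1}.

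The main obstacle is the bookkeeping in the symmetrization step: one must verify that the precise combination $Q^{0\beta}\del_\beta v\,\del_t w + Q^{0\beta}\del_\beta w\,\del_t v - Q^{\alpha\beta}\del_\alpha v\,\del_\beta w$ (with the single minus sign on the symmetric piece) emerges from the time derivative of $Q^{\alpha\beta}\del_\alpha v\,\del_\beta w$ and lines up with \eqref{eq:E-q}. This is where the assumed symmetry $Q^{\alpha\beta} = Q^{\beta\alpha}$ is essential; without it, the identification of $Q^{\alpha\beta}\del_\beta v\,\del_\alpha\del_t w + Q^{\alpha\beta}\del_\beta w\,\del_\alpha\del_t v$ with $\del_t(Q^{\alpha\beta}\del_\alpha v\,\del_\beta w)$ (modulo $\del_t Q$) fails, and an additional non-divergence-form remainder appears that cannot be absorbed into the right-hand side. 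No positivity or coercivity of the quasilinear correction is used here, only an identity, so once the algebra is done correctly, \eqref{eq:EE-q-A1} follows.
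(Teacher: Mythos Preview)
Your proposal is correct and follows essentially the same approach as the paper: multiply the two equations by $\del_t v$ and $\del_t w$, add, exploit the symmetry $Q^{\alpha\beta}=Q^{\beta\alpha}$ to write the quasilinear cross terms as the spacetime divergence $\del_\alpha(Q^{\alpha\beta}\del_\beta w\,\del_t v)+\del_\alpha(Q^{\alpha\beta}\del_\beta v\,\del_t w)-\del_t(Q^{\alpha\beta}\del_\beta v\,\del_\alpha w)$ plus the $\del Q$ remainder, and then integrate over $\Kcal_{[s_0,s]}$ and apply Stokes exactly as in Proposition~\ref{prop:ghost}. The paper's proof records precisely this identity and then defers the boundary computation to Proposition~\ref{prop:ghost}; your write-up simply spells out the Leibniz/symmetrization step and the matching of the boundary terms with the definition \eqref{eq:E-q} in more detail.
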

\begin{proof}
The proof is standard, and we note that the following identity can be shown using the symmetry property of $Q^{\alpha\beta}$
$$
\aligned
&\big(-\Box v + v + Q^{\alpha \beta} \del_\alpha \del_\beta w  \big) \del_t v
+
\big(-\Box w + Q^{\alpha \beta} \del_\alpha \del_\beta v  \big) \del_t w
=
f \del_t v + g \del_t w
\\
=
&{1\over 2} \del_t \big( (\del_t v)^2 + \sum_a (\del_a v)^2 + v^2 + (\del_t w)^2 + \sum_a (\del_a w)^2 \big)
-
\del_a \big( \del^a v \del_t v \big) 
-
\del_a \big( \del^a w \del_t w \big) 
\\
+
&\del_\alpha \big( Q^{\alpha\beta} \del_\beta w \del_t v \big)
+
\del_\alpha \big( Q^{\alpha\beta} \del_\beta v \del_t w \big) 
-
\del_t \big( Q^{\alpha\beta} \del_\beta v \del_\alpha w \big)
\\
-
&\del_\alpha Q^{\alpha \beta} \del_\beta w \del_t v
-
\del_\alpha Q^{\alpha\beta} \del_\beta v \del_t w
+
\del_t Q^{\alpha\beta} \del_\beta v \del_\alpha w.
\endaligned
$$
Then a similar computation to the proof of Proposition \ref{prop:ghost} leads us to the desired results in \eqref{eq:EE-q-A1}.
\end{proof}

We also have the following result.

\begin{lemma}\label{lem:equav}
Consider the energy $E(s, v, w)$ defined in \eqref{eq:E-q}, and we assume that
\be 
\big| Q^{\alpha\beta} \big|
\ll {1\over 100} {s^2 \over t^2}.
\ee
Then it holds that
\bel{eq:EE-e}
E (s, v, w)
\lesssim
E_1 (s,v ) + E (s, w)
\lesssim
E (s, v, w).
\ee
\end{lemma}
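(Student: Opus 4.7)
The plan is to show that the cross-term integral appearing in \eqref{eq:E-q} is bounded by $\tfrac{1}{2}\bigl(E_1(s,v)+E(s,w)\bigr)$, from which both inequalities in \eqref{eq:EE-e} follow immediately by the triangle inequality. To this end, denote the cross-term by $R(s) := E(s,v,w) - E_1(s,v) - E(s,w)$, so that the entire task reduces to controlling $|R(s)|$.

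Each of the four pieces of the integrand of $R(s)$ has the schematic form $Q \cdot \del u \cdot \del u'$ with $u, u' \in \{v, w\}$. Since $|x_a/t| \leq 1$ inside the cone $\Kcal$, I would bound the integrand pointwise by a constant multiple of $|Q^{\alpha\beta}|\,|\del v|\,|\del w|$, and then invoke the smallness hypothesis $|Q^{\alpha\beta}| \ll (s/t)^2/100$ to convert this into $(s/t)|\del v|\cdot(s/t)|\del w|$ up to a small constant. A single application of Cauchy-Schwarz then yields
$$|R(s)| \ll \tfrac{C}{100}\,\bigl\|(s/t)\del v\bigr\|_{L^2_f(\Hcal_s)}\,\bigl\|(s/t)\del w\bigr\|_{L^2_f(\Hcal_s)}.$$

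To finish, I need $\|(s/t)\del v\|_{L^2_f(\Hcal_s)} \lesssim E_1(s,v)^{1/2}$, and analogously for $w$. The time component is immediate from the second expression in \eqref{eq:2energy}. For the spatial components I would use the Cartesian/semi-hyperboloidal frame identity $\del_a v = -(x^a/t)\del_t v + \underdel_a v$, which together with $|x^a/t| \leq 1$ and $s/t \leq 1$ inside $\Kcal$ gives $|(s/t)\del_a v| \leq (s/t)|\del_t v| + |\underdel_a v|$; both summands are controlled in $L^2_f(\Hcal_s)$ by $E_1(s,v)^{1/2}$ via the same expression. An AM-GM step then produces $|R(s)| \leq \tfrac{1}{2}\bigl(E_1(s,v) + E(s,w)\bigr)$ provided the absolute constant in the hypothesis $\ll$ is chosen small enough, and \eqref{eq:EE-e} follows at once.

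The only mildly delicate bookkeeping step is the one just described: the expression \eqref{eq:2energy} for $E_1$ does not directly exhibit a term like $(s/t)\del_a v$, so this quantity must be recovered via the frame identity before Cauchy-Schwarz can be closed. Everything else is a routine application of Cauchy-Schwarz combined with the pointwise smallness $|Q^{\alpha\beta}| \ll (s/t)^2/100$, and the proof requires no information on the equations, only on the sizes of $v$ and $\del v$ that enter $Q^{\alpha\beta}$.
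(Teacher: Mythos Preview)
Your argument is correct. The paper states this lemma without proof, presumably regarding it as routine, and your approach is exactly the expected one: bound the cross term $R(s)$ pointwise by $C\,|Q|\,(s/t)|\del v|\,(s/t)|\del w|$, apply Cauchy--Schwarz, and absorb into $E_1(s,v)+E(s,w)$ via the smallness hypothesis. One small simplification: the ``mildly delicate bookkeeping step'' you flag---recovering $\|(s/t)\del_a v\|_{L^2_f(\Hcal_s)}$ from $E_1(s,v)^{1/2}$---is already recorded in the paper immediately after \eqref{eq:2energy}, so you can simply cite that line rather than redo the frame computation.
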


\subsection*{Bootstrap assumptions}

As usual, we assume the following bootstrap assumptions hold for $s \in [s_0, s_1)$
\bel{eq:BA-A}
\aligned
E_1 (s, \del^I L^J v)^{1/2} + E (s, \del^I L^J w)^{1/2}
&\leq C_1 \eps s^\delta,
\qquad
&|I| + |J| \leq N,
\\
E_1 (s, \del^I L^J v)^{1/2}
&\leq C_1 \eps,
\qquad
&|I| + |J| \leq N-3,
\\
\big| \del \del \del^I L^J w (t, x) \big|
&\leq C_1 \eps s^{-1} (t-r)^{-1/2},
\qquad
&|I| + |J| \leq N-5,
\endaligned
\ee
in which $C_1 > 1$ is some big number to be determined, and satisfies $C_1 \eps \ll 1/100$, and $s_1 > s_0$ is defined by
\be 
s_1 = \sup \{ s : s > s_0, \,\eqref{eq:BA-A} \,\, holds \}.
\ee

Direct consequences are:
\bel{eq:BA-v-A}
\aligned
\big\|\del^I L^J v \big\|_{L^2_f(\Hcal_s)} + \big\| (s/t) \del \del^I L^J v, (s/t) \del \del^I L^J w \big\|_{L^2_f(\Hcal_s)}
&\lesssim C_1 \eps s^\delta,
\quad
&|I| + |J| \leq N,
\\
\big\|\del^I L^J v \big\|_{L^2_f(\Hcal_s)} + \big\| (s/t) \del \del^I L^J v \big\|_{L^2_f(\Hcal_s)}
&\lesssim C_1 \eps,
\quad
&|I| + |J| \leq N-3,
\\
|\del^I L^J v |
&\lesssim C_1 \eps t^{-1} s^\delta,
\quad
&|I| + |J| \leq N-2,
\\
\big|\del^I L^J v \big|
&\lesssim C_1 \eps t^{-1},
\quad
&|I| + |J| \leq N-5.
\endaligned
\ee

The estimates
$$
\big|\del^I L^J v \big|
\lesssim C_1 \eps t^{-1}
\lesssim C_1 \eps (s/t)^2,
\qquad
|I| + |J| \leq N-5
$$
as well as Lemma \ref{lem:equav} imply that
\bel{eq:A-equa2} 
E (s, \del^I L^J v, \del^I L^J w)
\lesssim
E_1 (s, \del^I L^J v ) + E (s, \del^I L^J w)
\lesssim
E (s, \del^I L^J v, \del^I L^J w),
\quad
|I| + |J| \leq N.
\ee

\subsection*{Improved estimates}

To improve the estimates appearing in the bootstrap assumptions \eqref{eq:BA-A}, we go through the analysis in Subsection \ref{subsec:refine}. To treat the quasilinear system \eqref{eq:model-q-A} our strategy is to apply the energy estimates \eqref{eq:EE-q-A1} for the high order energies, while we rely on the energy estimates \eqref{eq:w-EE}, \eqref{eq:w-EE2}, and \eqref{eq:ghost-D} for the low order energies, where we pretend the system \eqref{eq:model-q-A} is a semilinear system.

We start with the high order energy estimates of $v, w$ components.

\begin{lemma}
Under the assumptions in \eqref{eq:BA-A}, for all $s \in [s_0, s_1)$ we have
\bel{eq:A-w}
\aligned
E_1 (s, \del^I L^J v)^{1/2} + E (s, \del^I L^J w)^{1/2}
&\leq \eps + (C_1 \eps)^{3/2} s^\delta,
\qquad
&|I| + |J| \leq N.
\endaligned
\ee

\end{lemma}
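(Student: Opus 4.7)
The plan is to apply the quasilinear energy identity \eqref{eq:EE-q-A1} of Proposition \ref{prop:EE-q} to the commuted system, and then convert back to the standard energies via the equivalence \eqref{eq:A-equa2} of Lemma \ref{lem:equav}. First I would commute $\del^I L^J$ (with $|I|+|J|\leq N$) through \eqref{eq:model-q-A} to obtain
$$
-\Box \del^I L^J v + \del^I L^J v + Q^{\alpha\beta}\del_\alpha\del_\beta \del^I L^J w = f_{I,J},
\qquad
-\Box \del^I L^J w + Q^{\alpha\beta}\del_\alpha\del_\beta \del^I L^J v = g_{I,J},
$$
where $f_{I,J}, g_{I,J}$ are commutator terms of schematic form $\sum \del^{I_1}L^{J_1}Q^{\alpha\beta}\cdot \del^{I_2}L^{J_2}\del_\alpha\del_\beta (v \text{ or } w)$ with $|I_1|+|J_1|\geq 1$ and $|I_1|+|J_1|+|I_2|+|J_2|\leq |I|+|J|$. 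Before invoking Proposition \ref{prop:EE-q}, I would check that $|Q^{\alpha\beta}|\lesssim C_1\eps\,t^{-1}\ll(s/t)^2/100$ (which holds on $\Kcal$ since $t\lesssim s^2$), so that the hypothesis of Lemma \ref{lem:equav} is valid and the modified energy $E(s,\del^I L^J v,\del^I L^J w)$ is comparable to $E_1(s,\del^I L^J v)+E(s,\del^I L^J w)$.

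Next I would bound the spacetime integral in \eqref{eq:EE-q-A1}, which consists of three types of contributions. \emph{Type 1:} the quasilinear corrections $(s'/t)\del_\alpha Q^{\alpha\beta}\,\del_\beta(\del^I L^J v)\,\del_t(\del^I L^J w)$ and its symmetric counterparts, plus $(s'/t)\del_t Q^{\alpha\beta}\del_\beta(\del^I L^J v)\del_\alpha(\del^I L^J w)$. Since $Q=P_1 v+P_2\del v$, the derivatives of $Q$ give $|\del Q|\lesssim |\del v|+|\del\del v|$, and from \eqref{eq:BA-v-A} (with one factor of $s/t$ absorbed as in \eqref{eq:w-EE2}) I would get $\|\del Q\cdot(s/t)\|_{L^\infty}\lesssim C_1\eps\,s^{-1}$ up to a harmless factor $s^\delta$ after using \eqref{eq:ddu} on the top-order piece. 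Pairing with the $L^2$ energy of $\del^I L^J v$ and $\del^I L^J w$ yields a contribution of size $\int_{s_0}^s (C_1\eps)(C_1\eps s'^\delta)^2\,s'^{-1}\,ds'\lesssim (C_1\eps)^3\,s^\delta$. \emph{Type 2:} the source terms $\int(s'/t)f_{I,J}\del_t\del^I L^J v$ and $\int(s'/t)g_{I,J}\del_t\del^I L^J w$. Here I split commutator summands by whether the top derivatives fall on $Q$ or on the second-order derivative factor: if $|I_2|+|J_2|\leq N-5$ I use the pointwise bound $|\del\del\del^{I_2}L^{J_2}w|\lesssim C_1\eps\,s^{-1}(t-r)^{-1/2}$ from the third line of \eqref{eq:BA-A} (or $|\del\del \del^{I_2}L^{J_2}v|\lesssim C_1\eps\,t^{-1}$) against the $L^2$ norm of the high-order $Q$ piece, and otherwise the low-order $Q$ gives a pointwise $C_1\eps\,t^{-1}s^\delta$ factor against an $L^2$ norm of $\del^{I_2}L^{J_2}\del\del(\cdot)$ (moving the $s'/t$ weight as in the proof of Proposition \ref{prop:refine}); in either case the integrand is bounded by $(C_1\eps)^3\,s'^{-1+2\delta}$ times the weighted energy.

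Summing the three types of contributions and invoking \eqref{eq:A-equa2}, I would obtain
$$
E_1(s,\del^I L^J v) + E(s,\del^I L^J w)
\lesssim \eps^2 + (C_1\eps)^3\,s^{2\delta},
$$
from which the desired bound $\eps+(C_1\eps)^{3/2}s^\delta$ follows after taking square roots. The main obstacle I anticipate is the top-order commutator case, where $|I_1|+|J_1|=|I|+|J|=N$ and all derivatives land on $Q$: there the factor $\del^{I_1}L^{J_1}Q$ is only controlled in $L^2$ by $C_1\eps\,s^\delta$, so one must multiply against the pointwise decay of $\del\del w$ from the third bootstrap assumption (and the analogous $|\del\del v|\lesssim C_1\eps\,t^{-1}$ coming from \eqref{eq:ddu} together with \eqref{eq:BA-v-A}). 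The $(t-r)^{-1/2}$ loss in the $\del\del w$ pointwise bound is exactly cancelled by the surplus $(t-r)^{1/2}$ weight one obtains from moving the $(s'/t)$ factor, so the integrand remains integrable in $s'$ up to the $s^\delta$ growth, which is the shape of the estimate claimed in \eqref{eq:A-w}.
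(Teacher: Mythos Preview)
Your proposal follows essentially the same route as the paper: commute $\del^I L^J$ through \eqref{eq:model-q-A}, apply the quasilinear energy identity \eqref{eq:EE-q-A1}, pass to the standard energies via \eqref{eq:A-equa2}, and bound the spacetime integrand by splitting products according to which factor carries at most $N-5$ derivatives (so that it can be placed in $L^\infty$). The paper is in fact more compressed than you are---it writes out only the $f_1$ contribution and declares the remaining terms ``can be bounded in a similar way''---so your Type~1/Type~2 breakdown and explicit discussion of the top-order commutator are additional detail rather than a different method.

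Two small inaccuracies are worth flagging. First, in your Type~1 estimate you need $\|(t/s)\,\del Q\|_{L^\infty}\lesssim C_1\eps\,s^{-1}$ (not $(s/t)\del Q$): one $(s/t)$ goes on each of $\del_\beta\del^I L^J v$ and $\del_t\del^I L^J w$, leaving a factor $(t/s)$ on $\del Q$; since $|\del Q|\lesssim C_1\eps\,t^{-1}$ this indeed gives $C_1\eps\,s^{-1}$. Relatedly, the integral $\int_{s_0}^s (C_1\eps)^3 s'^{-1+2\delta}\,ds'$ is of size $(C_1\eps)^3 s^{2\delta}$, not $s^\delta$; your final displayed bound already has the correct exponent. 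Second, your remark that the $(t-r)^{-1/2}$ loss in $|\del\del\del^{I_2}L^{J_2}w|$ is ``exactly cancelled by the surplus $(t-r)^{1/2}$ weight one obtains from moving the $(s'/t)$ factor'' is not how the paper argues and is unnecessary: on $\Kcal$ one has $t-r\geq 1$, so $(t-r)^{-1/2}\leq 1$ and the bootstrap bound $|\del\del\del^{I_2}L^{J_2}w|\lesssim C_1\eps\,s^{-1}$ is used directly, exactly as in the paper's second displayed product $\|\del^{I_2}L^{J_2}v\|_{L^2}\|\del\del\del^{I_1}L^{J_1}w\|_{L^\infty}$.
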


\begin{proof}
We act $\del^I L^J$ on the equations in \eqref{eq:model-q-A} to have
$$
\aligned
&-\Box \del^I L^J v + \del^I L^J v + Q^{\alpha \beta} \del_\alpha \del_\beta \del^I L^J w = f_1,
\\
&-\Box \del^I L^J w + Q^{\alpha \beta} \del_\alpha \del_\beta \del^I L^J v = g_1,
\endaligned
$$
with
$$
\aligned
f_1 = & Q^{\alpha \beta} \del_\alpha \del_\beta \del^I L^J w - \del^I L^J \big( Q^{\alpha \beta} \del_\alpha \del_\beta w \big),
\\
g_1 = & Q^{\alpha \beta} \del_\alpha \del_\beta \del^I L^J v - \del^I L^J \big( Q^{\alpha \beta} \del_\alpha \del_\beta v \big).
\endaligned
$$
Then the energy estimates \eqref{eq:EE-q-A1} and \eqref{eq:A-equa2} deduce
$$
\aligned
&E_1 (s, \del^I L^J v) + E (s, \del^I L^J w)
\\
\lesssim
&E_1 (s_0, \del^I L^J v) + E (s_0, \del^I L^J w)
+
\int_{s_0}^s \int_{\Hcal_{s'}} (s'/t) \Big( f_1 \del_t \del^I L^J v + g_1 \del_t \del^I L^J w 
\\
&\hskip1cm
+ \del_\alpha Q^{\alpha \beta} \del_\beta \del^I L^J  v \del_t \del^I L^J  w + \del_\alpha Q^{\alpha\beta} \del_\beta \del^I L^J  w \del_t \del^I L^J  v 
 - \del_t Q^{\alpha\beta} \del_\beta v \del_\alpha w \Big) \, dxds'.
\endaligned
$$
We only provide the estimates for the term involving $f_1$, and other terms can be bounded in a similar way.
We proceed 
$$
\aligned
\int_{\Hcal_{s}} (s/t)
\big| f_1 \del_t \del^I L^J v \big| \, dx
\lesssim
&\sum_{\substack{|I_1| + |J_1| \leq N-5 \\|I_2| + |J_2| \leq N}} \Big( \big\| (t/s) \del^{I_1} L^{J_1} v \big\|_{L^\infty(\Hcal_s)}   \big\| (s/t) \del \del^{I_2} L^{J_2} w \big\|_{L^2_f (\Hcal_s)} 
\\
+ & \big\| \del^{I_2} L^{J_2} v \big\|_{L^2_f(\Hcal_s)}   \big\| \del \del \del^{I_1} L^{J_1} w \big\|_{L^\infty (\Hcal_s)} \Big) 
\big\| (s/t) \del_t \del^I L^J v \big\|_{L^2_f (\Hcal_s)} 
\\
\lesssim (C_1 \eps)^3 s^{-1+2\delta}.
\endaligned
$$
We thus have
$$
E_1 (s, \del^I L^J v) + E (s, \del^I L^J w)
\lesssim
\eps^2 + (C_1 \eps)^3 s^{2\delta}.
$$ 

The proof is done.
\end{proof}

Next, we turn to prove the refined pointwise decay estimates of $\del \del w$.

\begin{proposition}
We have
\be 
\aligned
\big\| (s/t) (t-|x|)^{1/2} \del \del \del^I L^J w \big\|
\lesssim 
\eps + (C_1 \eps)^{3/2},
\qquad
|I| + |J| \leq N-3,
\\
\big| \del \del \del^I L^J w \big|
\lesssim 
\big( \eps + (C_1 \eps)^{3/2} \big) s^{-1} (t-|x|)^{-1/2},
\qquad
|I| + |J| \leq N-5.
\endaligned
\ee

\end{proposition}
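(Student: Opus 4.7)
The plan is to replicate, for the $w$ variable, the three-step strategy used to establish \eqref{eq:n12-2}--\eqref{eq:n12-5} in the semilinear Klein--Gordon--Zakharov case. Commuting $\del^I L^J$ through the $w$ equation in \eqref{eq:model-q-A} gives $-\Box \del^I L^J w = -\del^I L^J(Q^{\alpha\beta}\del_\alpha\del_\beta v)$, which is the analogue of the equation $-\Box \del^I L^J n^\Delta = \del^I L^J(|E|^2)$ used before.

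First, I would apply Proposition \ref{prop:ghost} with $\gamma = 4\delta$ to the commuted equation for $|I|+|J|\leq N-1$. The source space--time integral
\[
\int_{s_0}^s \int_{\Hcal_{s'}} (s'/t)(t-r)^{-4\delta} |\del_t\del^I L^J w|\,|\del^I L^J(Q^{\alpha\beta}\del_\alpha\del_\beta v)|\,dxds'
\]
is controlled by Cauchy--Schwarz: one factor is $E(s', \del^I L^J w)^{1/2} \lesssim C_1\eps s'^\delta$ from \eqref{eq:A-w}, while the other is treated by distributing derivatives onto $Q^{\alpha\beta}\sim v + \del v$ and onto $\del\del v$. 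High-order derivatives are paired with the $L^2$ bounds in \eqref{eq:BA-v-A}, and low-order factors with pointwise bounds from \eqref{eq:BA-v-A}; the pointwise estimate on $\del\del v$ needed for the low-order piece is obtained by an auxiliary application of \eqref{eq:ddu} to $v$, using $\Box v = v + Q^{\alpha\beta}\del_\alpha\del_\beta w$ together with the $L^\infty$ assumption $|\del\del w|\lesssim s^{-1}(t-r)^{-1/2}$ from \eqref{eq:BA-A}. The resulting time integrand is $\lesssim (C_1\eps)^3 s^{-1-c\delta}$, giving
\[
\int_{\Hcal_s}(t-r)^{-4\delta}(s/t)^2 |\del\del^I L^J w|^2\,dx \lesssim \eps^2 + (C_1\eps)^3.
\]

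Second, I would apply \eqref{eq:ddu} to $u = \del^I L^J w$ to upgrade to the two stated bounds. For the $L^2$ claim with $|I|+|J|\leq N-3$, multiply through by $(s/t)(t-r)^{1/2}$: the pieces $(t-r)^{-1}(|\del L \del^I L^J w| + |\del \del^I L^J w|)$ are controlled by the Step~1 output together with the pointwise comparison $(t-r)^{-1/2} \leq (t-r)^{-2\delta}$ valid for $\delta < 1/4$, and the $\frac{t}{t-r}|\Box \del^I L^J w|$ piece is controlled by the Step~1 source estimate after using $s^2 \leq 2(t-r)t$ to bound $s(t-r)^{-1/2}\lesssim \sqrt{t}$. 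For the pointwise claim with $|I|+|J|\leq N-5$, the Sobolev inequality \eqref{eq:Sobolev3} applied to Step~1 yields $|\del \del^I L^J w|\lesssim s^{-1}(t-r)^{2\delta}$, so the first two pieces of \eqref{eq:ddu} produce $s^{-1}(t-r)^{-1+2\delta}\lesssim s^{-1}(t-r)^{-1/2}$, and the $\Box$ piece is handled pointwise by the same source analysis in $L^\infty$.

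The main obstacle is the auxiliary pointwise control of $\del\del v$ appearing through $Q^{\alpha\beta}\del_\alpha\del_\beta v$. Since $v$ itself solves a quasilinear Klein--Gordon equation whose $\Box v$ is forced by $\del\del w$ --- the very object being improved --- closure requires that the auxiliary invocation of \eqref{eq:ddu} for $v$ only pull in the $L^\infty$ bootstrap on $\del\del w$ at \emph{strictly lower} order than the order of $w$ under improvement, which is afforded by the regularity buffer $N\geq 11$. This tradeoff is precisely what forces the index loss from $N$ (in the semilinear case) down to $N-3, N-5$ in the present proposition.
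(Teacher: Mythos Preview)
Your overall two-step strategy --- ghost weight energy estimate (Proposition \ref{prop:ghost}) for $\del^I L^J w$, followed by the pointwise/weighted-$L^2$ upgrade via \eqref{eq:ddu} --- is exactly the paper's route, and your Step~2 analysis is correct.

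The problem lies in Step~1, specifically the ``main obstacle'' you identify. There is no obstacle: the pointwise control on $\del_\alpha\del_\beta v$ (and more generally on $\del\del\,\del^{I_2}L^{J_2}v$) is already contained in the bootstrap consequences \eqref{eq:BA-v-A}, because $\del\del v$ is simply a particular instance of $\del^I L^J v$ with $|I|=2$. Thus $\big|\del\del\,\del^{I_2}L^{J_2}v\big|\lesssim C_1\eps\, t^{-1}s^\delta$ for $|I_2|+|J_2|\leq N-4$ directly, and the source estimate then goes through verbatim as in the proof of \eqref{eq:n12-2}. No auxiliary application of \eqref{eq:ddu} to $v$ is needed, and the index drop from $(N-1,N-3)$ in the semilinear case to $(N-3,N-5)$ here comes simply from the two extra partial derivatives in the source $Q^{\alpha\beta}\del_\alpha\del_\beta v$ compared with $|E|^2$, not from any feedback through $\del\del w$.

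Moreover, the proposed auxiliary use of \eqref{eq:ddu} on $v$ would in fact \emph{fail}: since $v$ is Klein--Gordon, $\Box v = v - Q^{\alpha\beta}\del_\alpha\del_\beta w$ contains the undifferentiated mass term $v\sim t^{-1}$, so the piece $\tfrac{t}{t-r}|\Box v|$ in \eqref{eq:ddu} is only $\lesssim (t-r)^{-1}$. Near the light cone (where $t-r\sim 1$) this is much weaker than the direct bound $t^{-1}s^\delta$, and inserting $(t-r)^{-1}$ into the ghost-weight source estimate produces a non-integrable time integrand rather than $s^{-1-c\delta}$. The estimate \eqref{eq:ddu} is tailored to wave components and gives nothing useful for Klein--Gordon ones.
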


\begin{proof}
The proof is very similar to the proof of \eqref{eq:n12-5}, which follows from \eqref{eq:ddu} and
$$
\int_{\Hcal_s} (t-r)^{-4\delta} (s/t)^2 \big| \del \del^I L^J w \big|^2 \, dx
\lesssim
\eps^2 +  (C_1 \eps)^3,
\qquad
|I| + |J| \leq N-2,
$$
and we omit the details.
\end{proof}

Finally, we show the refined uniform energy bounds for $v$.

\begin{proposition}
It holds
\be 
E_1 (s, \del^I L^J v)^{1/2}
\leq \eps + (C_1 \eps)^{3/2},
\qquad
|I| + |J| \leq N-3.
\ee
\end{proposition}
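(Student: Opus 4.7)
The strategy mirrors precisely the treatment of the low-order uniform bound of $E$ in Proposition \ref{prop:refine}; as announced at the beginning of the \emph{Improved estimates} subsection, we pretend the quasilinear system \eqref{eq:model-q-A} is semilinear and apply the linear energy estimate \eqref{eq:w-EE2} with the $Q^{\alpha\beta}\del_\alpha\del_\beta w$ term moved to the source. Concretely, applying $\del^I L^J$ to the $v$-equation in \eqref{eq:model-q-A} yields
$$
-\Box \del^I L^J v + \del^I L^J v = -\del^I L^J\bigl(Q^{\alpha\beta} \del_\alpha \del_\beta w\bigr),
$$
so that \eqref{eq:w-EE2} gives
$$
E_1(s,\del^I L^J v) \le E_1(s_0,\del^I L^J v) + \int_{s_0}^{s}\int_{\Hcal_{s'}} (s'/t)\,\bigl|\del_t \del^I L^J v\bigr|\,\bigl|\del^I L^J\bigl(Q^{\alpha\beta}\del_\alpha \del_\beta w\bigr)\bigr|\,dxds'.
$$

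The key \emph{trick}, exactly as in Proposition \ref{prop:refine}, is to slide the good factor $s'/t$ onto the slowly-decaying wave factor $\del\del w$ and then invoke the weighted $L^2$-bound $\|(s/t)(t-|x|)^{1/2}\del\del\del^{I_2}L^{J_2} w\|_{L^2_f(\Hcal_s)} \lesssim \eps + (C_1\eps)^{3/2}$ proved in the previous proposition. Expanding the source term by Leibniz, each contribution has the schematic form $\del^{I_1}L^{J_1}(\del^{\le 1} v)\cdot \del\del \del^{I_2}L^{J_2} w$ with $|I_1|+|J_1|+|I_2|+|J_2|\le N-3$. Since $N\ge 11$, at least one of $|I_1|+|J_1|+1$ and $|I_2|+|J_2|$ is bounded by $N-5$, so one factor always admits the $L^\infty$ bootstrap bound. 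A Hölder split in the two natural configurations gives, respectively,
$$
\bigl\|(s'/t)(t-|x|)^{1/2}\del\del\del^{I_2}L^{J_2} w\bigr\|_{L^2_f(\Hcal_{s'})}\,\bigl\|(t-|x|)^{-1/2}\del^{\le 1}\del^{I_1}L^{J_1} v\bigr\|_{L^\infty(\Hcal_{s'})}\,\bigl\|\del_t \del^I L^J v\bigr\|_{L^2_f(\Hcal_{s'})}
$$
and the analogous estimate with the roles of $L^\infty$ and $L^2$ interchanged, now using the pointwise bound $|\del\del\del^{I_2}L^{J_2} w|\lesssim (\eps+(C_1\eps)^{3/2})s^{-1}(t-|x|)^{-1/2}$ from the preceding proposition.

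Combining the bootstrap assumptions \eqref{eq:BA-A}--\eqref{eq:BA-v-A} with the elementary inequality $(t-|x|)^{-1/2}\lesssim \sqrt{t}/s$ (which follows from $t-|x|=s^2/(t+|x|)\ge s^2/(2t)$) and the low-order pointwise bounds $|v|,|\del v|\lesssim C_1\eps\, t^{-1}$, each Hölder product is controlled by $(C_1\eps)^3\, s'^{-3/2+3\delta}$. Since $\delta<1/24$, the integral in $s'$ converges uniformly in $s$, giving $E_1(s,\del^I L^J v)\lesssim \eps^2 + (C_1\eps)^3$ and hence the claimed bound after taking square roots.

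\textbf{Expected main obstacle.} Conceptually the proof is routine once one notices that every ingredient for Proposition \ref{prop:refine} is already in place in the quasilinear setting. The only delicate point is the regularity bookkeeping: one must verify that at the order $|I|+|J|\le N-3$ every Leibniz split can afford to place either $\del^{\le 1}v$ or $\del\del w$ in $L^\infty$, which is precisely the reason the hierarchy $N-5 < N-3 \le N$ was set up. It is also essential to use the quadratic energy identity \eqref{eq:w-EE2} rather than \eqref{eq:w-EE}, since only the former permits transferring $s'/t$ onto the $\del\del w$ factor, thereby upgrading a non-integrable $s'^{-1}$ into the integrable $s'^{-3/2+3\delta}$.
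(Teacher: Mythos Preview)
Your proposal is correct and follows essentially the same route as the paper, which simply refers back to the proof of Proposition~\ref{prop:refine}. As a minor simplification, note that since $|I_1|+|J_1|+|I_2|+|J_2|\le N-3$ automatically forces both $|I_2|+|J_2|\le N-3$ (so the weighted $L^2$ bound on $\del\del w$ from the preceding proposition applies) and $|I_1|+|J_1|+1\le N-2$ (so the $L^\infty$ bound on $v$ with at worst an $s^\delta$ loss applies), your first H\"older configuration alone already covers every term and the second split is not needed.
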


\begin{proof}
The proof is very similar to the proof of Proposition \ref{prop:refine}, and we omit it.
\end{proof}

By carefully choosing $C_1$ large enough, and $\eps$ sufficiently small, we obtain the following refined estimates for $s \in [s_0, s_1)$ 
$$
\aligned
E_1 (s, \del^I L^J v)^{1/2} + E (s, \del^I L^J w)^{1/2}
&\leq {1\over 2} C_1 \eps s^\delta,
\qquad
&|I| + |J| \leq N,
\\
E_1 (s, \del^I L^J v)^{1/2}
&\leq {1\over 2} C_1 \eps,
\qquad
&|I| + |J| \leq N-3,
\\
\big| \del \del \del^I L^J w (t, x) \big|
&\leq {1\over 2} C_1 \eps s^{-1} (t-r)^{-1/2},
\qquad
&|I| + |J| \leq N-5,
\endaligned
$$
which imply the global existence result in Theorem \ref{thm:main2}. Together with the estimates in \eqref{eq:BA-v-A}, the pointwise decay results in Theorem \ref{thm:main2} are also proved.





{\footnotesize

\end{document}